\pgfplotsset{compat=1.15}
\def\modd#1 #2{#1\ \mbox{\rm (mod}\ #2\mbox{\rm )}}
\begin{document}

\begin{center}
\epsfxsize=4in
\end{center}

\theoremstyle{plain}
\newtheorem{theorem}{Theorem}
\newtheorem{corollary}[theorem]{Corollary}
\newtheorem{lemma}[theorem]{Lemma}
\newtheorem{proposition}[theorem]{Proposition}
\newtheorem{claim}[theorem]{Claim}

\theoremstyle{definition}
\newtheorem{definition}[theorem]{Definition}
\newtheorem{example}[theorem]{Example}
\newtheorem{conjecture}[theorem]{Conjecture}

\theoremstyle{remark}
\newtheorem{remark}[theorem]{Remark}

\begin{center}
\vskip 1cm{\LARGE\bf 
2-Adic Valuations of Quadratic Sequences
}
\vskip 1cm
Will Boultinghouse\\
Kentucky Wesleyan College\\
Division of Natural Sciences and Mathematics\\
3000 Frederica Street\\
Owensboro, KY 42301 \\
USA\\

\ \\
Jane Long\\
Stephen F. Austin State University\\
Department of Mathematics and Statistics\\
Nacogdoches, TX 75962-3040 \\
USA\\

\ \\

Olena Kozhushkina\\
Ursinus College\\
Department of Mathematics and Computer Science\\
Collegeville, PA 19426\\
USA \\

\ \\

Justin Trulen\\
Kentucky Wesleyan College\\
Division of Natural Sciences and Mathematics\\
3000 Frederica Street\\
Owensboro, KY 42301 \\
USA\\

\end{center}

\vskip .2 in
						
\begin{abstract} 
We determine
properties of the 2-adic valuation sequences
for general quadratic polynomials with integer coefficients 
directly from the coefficients. These properties include boundedness or unboundedness, periodicity, and valuations at terminating nodes. We completely describe the periodic sequences in the bounded case. Throughout, we frame results in terms of trees and sequences.
\end{abstract}

\section{Introduction}\label{SecIntro}
For $p$ prime and $n\in\mathbb{N}=\{0,1,2,3,\ldots\}$, the exponent of the highest power of $p$ that divides $n$ is called the \textit{$p$-adic valuation of $n$}, which we denote $\nu_{p}(n)$. The valuation of $0$ is defined to be $+\infty$. Formally, the valuation of a positive integer $n$ of the form $n=p^{k}d$, where $k\in\mathbb{N}$ and $d$ is an integer not divisible by $p$, is $\nu_{p}(n)=k$. We can find $p$-adic valuations of sequences by finding the valuation of each successive term. The present work considers 2-adic valuations of sequences generated from the natural numbers by evaluating quadratic functions of the form $f(n)=an^{2}+bn+c$ where $a,b,c\in\mathbb{Z}$ and $a\neq0$.

Information about sequences of valuations can be viewed in two different ways: in terms of sequences and in terms of trees. We let $(\nu_2(f(n)))_{n\geq 0}$ denote the sequence of 2-adic valuations for the quadratic function $f(n)$. Since every positive natural number $n$ can be written in the form $n=2^k d$, where $d$ is not divisible by 2, in many cases, we can determine the valuations of outputs of the quadratic function $f(n)$ using characteristics of the coefficients $a$, $b$, and $c$. The main results are given in Theorems~\ref{MainThm1} and~\ref{MainThm2}; one would anticipate these results can be extended to odd primes with some modifications, which will be addressed in future work.
\begin{theorem}\label{MainThm1} Let $f(n)=an^{2}+bn+c$ where $a,b,c\in\mathbb{Z}$ with $a\neq 0$ and, without loss of generality, $a,b,c$ are not all even. Then
\begin{enumerate}
    \item If $a$ and $b$ are even and $c$ is odd, then $\nu_{2}(f(n))=0$ for all $n\in\mathbb{N}$. 
    \item If $a$ is even and $b$ is odd, then $(\nu_{2}(f(n)))_{n\geq 0}$ is an unbounded sequence.
    \item If $a$ is odd and $b$ is even, then
    \begin{enumerate}
        \item if $b^{2}-4ac=0$, then $(\nu_{2}(f(n)))_{n\geq 0}$ is an unbounded sequence;
        \item if $b^{2}-4ac=4^{\ell}\Delta$ for $\ell\in\mathbb{Z}^{+}$ as large as possible and $\Delta\equiv\modd{1} {8}$, then $(\nu_{2}(f(n)))_{n\geq 0}$ is an unbounded sequence;
        \item if $b^{2}-4ac=4^{\ell}\Delta$ for $\ell\in\mathbb{Z}^{+}$ as large as possible,
        $\Delta\equiv\modd{m} {8}$, and $m\in\left\{2,3,5,6,7\right\}$, then the sequence $(\nu_{2}(f(n)))_{n\geq 0}$ is bounded and its minimal period length equals $2^{\ell}$.
    \end{enumerate}
    \item If $a$ and $b$ are odd and $c$ is even, then $(\nu_{2}(f(n)))_{n\geq 0}$ is an unbounded sequence.
    \item If $a$, $b$, and $c$ are odd, then $\nu_{2}(f(n))=0$ for all $n\in\mathbb{N}$. 
    
\end{enumerate}
\end{theorem}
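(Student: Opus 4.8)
The plan is to split into the five cases and handle them by the behavior of $f$ modulo $2$, reserving the real work for case (3) via completing the square. Cases (1) and (5) are immediate: reducing $f(n)=an^2+bn+c$ modulo $2$ gives $f(n)\equiv c$ in case (1) and $f(n)\equiv n^2+n+1 = n(n+1)+1$ in case (5), and since $n(n+1)$ is always even, both reductions are odd for every $n$. Hence $\nu_2(f(n))=0$ throughout, as claimed.

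For the unbounded cases (2) and (4) I would invoke a Hensel-type lifting argument. In both cases $b$ is odd, so $f'(n)=2an+b$ is odd and therefore a $2$-adic unit at every point. It then suffices to exhibit a root of $f$ modulo $2$: by Hensel's lemma such a root lifts uniquely to a $2$-adic root $\alpha$, and truncating $\alpha$ modulo $2^k$ produces integers $n$ with $\nu_2(f(n))\ge k$ for every $k$, giving unboundedness. In case (2) the reduction $f(n)\equiv n+c\pmod 2$ has the root $n\equiv c$, and in case (4) one has $f(n)\equiv n(n+1)\equiv 0\pmod 2$ for all $n$, so a root certainly exists.

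The substance is case (3), with $a$ odd and $b=2\beta$ even. Here I would complete the square through $4a\,f(n) = (2an+b)^2 - (b^2-4ac)$. Because $a$ is odd, $\nu_2(4a\,f(n)) = 2+\nu_2(f(n))$, and since $b$ is even the right-hand side equals $4\big((an+\beta)^2-\delta\big)$ with $\delta=(b^2-4ac)/4 = 4^{\ell-1}\Delta$. Writing $t=an+\beta$ yields the clean reduction
\[
\nu_2(f(n)) = \nu_2\!\big(t^2-\delta\big),\qquad t=an+\beta .
\]
As $n$ runs over $\mathbb{N}$ and $a$ is odd, $t$ runs over every residue modulo each $2^k$, so I need only understand $\nu_2(t^2-\delta)$ as $t$ varies. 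Writing $t=2^su$ with $u$ odd and using $u^2\equiv 1\pmod 8$, the valuation is governed by comparing $2s$ with $\nu_2(\delta)$. Subcase (3a) ($\Delta=\delta=0$) gives $\nu_2(f(n))=2\nu_2(t)$, which is unbounded. When $\Delta$ is odd, $\nu_2(\delta)=2\ell-2$ and the only way to exceed it is $s=\ell-1$, contributing $\nu_2(u^2-\Delta)$: this is where the dichotomy lives, since $\Delta\equiv 1\pmod 8$ makes $\Delta$ a $2$-adic square and $\nu_2(u^2-\Delta)$ unbounded (subcase 3b), while $\Delta\equiv 3,5,7\pmod 8$ forces $u^2-\Delta\equiv 6,4,2\pmod 8$, a fixed finite valuation. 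When $\Delta\equiv 2,6\pmod 8$, $\nu_2(\delta)=2\ell-1$ is odd and can never equal the even $2s$, so $\nu_2(t^2-\delta)=\min(2s,2\ell-1)$ is bounded. This settles boundedness in all of (3).

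Finally, for the period claim in subcase (3c) I would establish two points. First, the computation above shows $\nu_2(t^2-\delta)$ depends only on $t\bmod 2^\ell$: all residues with $\nu_2(t)\ge\ell$ yield one common value, while a residue with $\nu_2(t)<\ell$ pins down $s=\nu_2(t)$ exactly; since $a$ is odd, $t\bmod 2^\ell$ is a function of $n\bmod 2^\ell$, so the sequence has period dividing $2^\ell$. Second, to see the minimal period equals $2^\ell$, I would compare $t_1=2^{\ell-1}$ with $t_2=2^\ell$, which agree modulo $2^{\ell-1}$ but differ modulo $2^\ell$, and whose valuations differ by a nonzero amount ($\nu_2(u^2-\Delta)\in\{1,2\}$ for odd $\Delta$, and $1$ for $\Delta\equiv 2,6$); pulling these back through $t=an+\beta$ to integers $n$ agreeing modulo $2^{\ell-1}$ but not $2^\ell$ rules out every smaller period. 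I expect the main obstacle to be the uniform bookkeeping in this last step: verifying across all residues $m\in\{2,3,5,6,7\}$ that the extremal valuation is genuinely attained and that no cancellation lets the period collapse below $2^\ell$.
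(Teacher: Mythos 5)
Your proposal is correct, and while it shares the paper's overall skeleton (a case split on the parities of $a,b,c$, with completing the square driving Case 3), it substitutes genuinely different machinery at the two key points. For Cases 2 and 4 the paper writes the roots via the quadratic formula, invokes the criterion that $x^{2}-(1-8j)$ has a zero in $\mathbb{Z}_{2}$, and then---because those expressions have even denominators---completes the argument by a level-by-level induction on the valuation tree (Propositions~\ref{InfCase1} and~\ref{InfCase2}); you instead apply Hensel's lemma directly to $f$, noting that $f'(n)=2an+b$ is a $2$-adic unit whenever $b$ is odd, so each root of $f$ modulo $2$ lifts to $\mathbb{Z}_{2}$ and truncation gives $\nu_{2}(f(n))\geq k$; this is shorter and also recovers the branch counts (one root mod $2$ in Case 2, two in Case 4). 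For Case 3 the paper relies on the Medina--Moll--Rowland dichotomy (Lemma~\ref{SuppLemm6}) for the periodic/unbounded alternative and then counts tree levels inductively (Lemmas~\ref{FinLem1}--\ref{FinLem3}); your reduction $\nu_{2}(f(n))=\nu_{2}(t^{2}-\delta)$ with $t=an+\beta$ and $\delta=4^{\ell-1}\Delta$, followed by the elementary analysis via $t=2^{s}u$ and $u^{2}\equiv 1\pmod{8}$, is self-contained: it yields boundedness, periodicity, the exact valuations (in fact reproving Theorem~\ref{MainThm2}), and minimality of the period through the explicit witnesses $t\equiv 2^{\ell-1}$ versus $t\equiv 0\pmod{2^{\ell}}$, with no appeal to the external lemma. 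What the paper's route buys is the tree/automaton vocabulary reused in Section~\ref{SecStrucTree}; what yours buys is a first-principles proof of both main theorems at once. Two small points to tighten: state the convention for reducible $f$ (where $\nu_{2}(f(n))$ can be $+\infty$ at an integer root), which the paper handles by passing to irreducible factors via Lemma~\ref{SuppLemm1}; and say explicitly that the minimal period divides every period, so exhibiting a failure of period $2^{\ell-1}$ excludes all proper divisors of $2^{\ell}$.
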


Theorem~\ref{MainThm1} is proved in Section~\ref{SecInf}. Henceforward, we will refer to the \textit{minimal period length} simply as the \textit{period}. In Case 3, we use the discriminant to determine whether roots to $f(n)=0$ lie in the 2-adic numbers $\mathbb{Q}_2$ or the ring of 2-adic integers $\mathbb{Z}_2$. Corollary~\ref{InfSpecCor} takes care of Case 3(a). Even though the statement of this theorem only classifies these sequences as unbounded, the proofs of Cases 2 and 4 reveal more information about the 2-adic valuations. Theorem~\ref{MainThm1} represents a complete answer to when $\nu_{2}(f(n))_{n\geq0}$ is bounded or unbounded using only the coefficients of the quadratic polynomial. Furthermore, Theorem~\ref{MainThm1} gives an explicit period length for the bounded sequences which can be determined by the coefficients of the quadratic polynomial. In the unbounded cases we are able to describe what possible valuations will be for certain subsequences. Such statements are easier to frame in the sense of trees, which are discussed in Section~\ref{SecParityandTrees}. Theorem~\ref{MainThm2}, proved in Sections~\ref{BoundedSection} and~\ref{SecStrucTree}, completely determines all valuations in the non-trivial bounded case (3(c) of Theorem~\ref{MainThm1}).

\begin{theorem}\label{MainThm2}
Let $f(n)=an^{2}+bn+c$ where $a,b,c\in\mathbb{Z}$. If $a$ is odd and $b$ is even and $b^{2}-4ac=4^{\ell}\Delta$ for $\ell\in\mathbb{Z}^{+}$ as large as possible with $\ell\geq2$, $\Delta\equiv\modd{m} {8}$, and $m\in\left\{2,3,5,6,7\right\}$, then the sequence $(\nu_{2}(f(n)))_{n\geq 0}$ is bounded with period equal to $2^{\ell}$. Furthermore, we have the following 2-adic valuations:
\begin{displaymath}
\nu_{2}(f(n))=\begin{cases}

0,&\ \text{if}\ n\equiv\modd{ a^{-1}\left(1-\frac{b}{2}\right)} {2};\\

2(i-1),&\ \text{if}\ n\equiv\modd{ a^{-1}\left(2^{i-1}-\frac{b}{2}\right)} {2^{i}}\ \text{with}\ 2\leq i<\ell;\\

2(\ell-1),&\ \text{if}\ n\equiv\modd{ a^{-1}\left(2^{\ell-1}-\frac{b}{2}\right)} {2^{\ell}}\ \text{and}\ m=6,2;\\

2\ell-1,&\ \text{if}\ n\equiv\modd{ a^{-1}\left(2^{\ell-1}-\frac{b}{2}\right)} {2^{\ell}}\ \text{and}\ m=7,3;\\

2\ell,&\ \text{if}\ n\equiv\modd{ a^{-1}\left(2^{\ell-1}-\frac{b}{2}\right)} {2^{\ell}}\ \text{and}\ m=5;\\

2\ell-1,&\ \text{if}\ n\equiv\modd{ a^{-1}\left(2^{\ell}-\frac{b}{2}\right)} {2^{\ell}}\ \text{and}\ m=6,2;\\

2(\ell-1),&\ \text{if}\ n\equiv\modd{ a^{-1}\left(2^{\ell}-\frac{b}{2}\right)} {2^{\ell}}\ \text{and}\ m=7,5,3;\\
\end{cases}
\end{displaymath}
where $a^{-1}$ is the inverse of $\modd{a} {2^{\ell}}$.
\end{theorem}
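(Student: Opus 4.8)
The plan is to reduce the problem to a single power-of-two computation by completing the square, and then to read off each valuation from the 2-adic size of one affine expression. Writing $g(n)=an+\frac{b}{2}$ (an integer, since $b$ is even), I would first record that
\[
4a\,f(n)=(2an+b)^2-(b^2-4ac)=4\bigl(g(n)^2-4^{\ell-1}\Delta\bigr),
\]
so that, because $a$ is odd and $\nu_2$ is additive,
\[
\nu_2(f(n))=\nu_2\bigl(g(n)^2-4^{\ell-1}\Delta\bigr).
\]
This turns everything into understanding $\nu_2(y^2-4^{\ell-1}\Delta)$ with $y=g(n)$. Since $a$ is a unit modulo every power of $2$, the map $n\mapsto g(n)$ is a bijection on $\mathbb{Z}/2^k\mathbb{Z}$, and the stated congruences on $n$ are exactly the $g$-preimages of the conditions $y\equiv 2^{i-1}\pmod{2^i}$, $y\equiv 2^{\ell-1}\pmod{2^\ell}$, and $y\equiv 0\pmod{2^\ell}$; so it suffices to compute the valuation on each such class of $y$.

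Next I would split into three regimes according to $j:=\nu_2(y)$ relative to $\ell-1$. If $j<\ell-1$, then $\nu_2(y^2)=2j<2(\ell-1)\le\nu_2(4^{\ell-1}\Delta)$, so the valuation is exactly $2j$; taking $j=0$ and $j=i-1$ for $2\le i<\ell$ produces the first two lines. If $j\ge\ell$, then $\nu_2(y^2)\ge 2\ell$ strictly exceeds $\nu_2(4^{\ell-1}\Delta)=2(\ell-1)+\nu_2(\Delta)$, where $\nu_2(\Delta)=0$ for $m$ odd and $\nu_2(\Delta)=1$ for $m\in\{2,6\}$ (maximality of $\ell$ forces $4\nmid\Delta$); hence the valuation equals $2(\ell-1)$ when $m$ is odd and $2\ell-1$ when $m$ is even, giving the final pair of lines.

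The crux is the boundary case $j=\ell-1$, where the two terms have equal valuation. Here I would write $y=2^{\ell-1}t$ with $t$ odd, so that
\[
g(n)^2-4^{\ell-1}\Delta=2^{2(\ell-1)}\bigl(t^2-\Delta\bigr),
\]
and thus $\nu_2(f(n))=2(\ell-1)+\nu_2(t^2-\Delta)$. The key arithmetic fact is that $t^2\equiv 1\pmod 8$ for every odd $t$, so $t^2-\Delta\equiv 1-m\pmod 8$ depends only on $m$: this yields $\nu_2(t^2-\Delta)=0$ for $m\in\{2,6\}$, $=1$ for $m\in\{3,7\}$, and $=2$ for $m=5$, which are precisely the three middle lines. (This is also where $m=1$ fails, since $1-1\equiv 0$ permits arbitrarily large valuations, matching Case 3(b) of Theorem~\ref{MainThm1}.) I expect this step to be the main obstacle, though chiefly as bookkeeping: one must keep the even and odd $\Delta$ cases separate and check that reduction modulo $8$ already pins down the valuation exactly, which holds because each residue $1-m$ is nonzero mod $8$ for $m\in\{2,3,5,6,7\}$.

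Finally, since $\nu_2(f(n))$ depends only on $n\bmod 2^\ell$, the period divides $2^\ell$. To see that it equals $2^\ell$, I would compare the residue $n_0$ with $g(n_0)\equiv 2^{\ell-1}\pmod{2^\ell}$ against $n_0+2^{\ell-1}$: because $a$ is odd, $g(n_0+2^{\ell-1})\equiv 2^{\ell-1}(1+a)\equiv 0\pmod{2^\ell}$, so these two inputs lie in the boundary regime and the $j\ge\ell$ regime respectively, and the valuations computed above differ for every $m\in\{2,3,5,6,7\}$. Hence the sequence is not $2^{\ell-1}$-periodic, which forces the minimal period to be exactly $2^\ell$.
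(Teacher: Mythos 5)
Your proof is correct, but it takes a genuinely different route from the paper's. The paper proves the valuation formula in two stages: first an induction on tree levels (Lemmas~\ref{FinLem2} and~\ref{FinLem3}) shows that for a general quadratic each level $i<\ell$ has one terminating node of valuation $2(i-1)$ and one non-terminating node, with the level-$\ell$ valuations determined by $m$; then Section~\ref{SecStrucTree} introduces translation and $S$-operators to conjugate the canonical type-$(\ell,1)$ quadratic $an^{2}+2an+c$ into the general one, and the closed-form residues $a^{-1}\bigl(2^{i-1}-\tfrac{b}{2}\bigr)$ are obtained by tracking how those operators permute the nodes (Propositions~\ref{FinPropStruc1}--\ref{FinPropStruc3} and Corollary~\ref{FinCor}). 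You instead complete the square once, observe that $\nu_2(f(n))=\nu_2\bigl(g(n)^2-4^{\ell-1}\Delta\bigr)$ with $g(n)=an+\tfrac{b}{2}$, and note that the stated congruences on $n$ are precisely the preimages under the unit map $n\mapsto g(n)$ of the conditions $\nu_2(g(n))=i-1$, $=\ell-1$, or $\geq\ell$; the ultrametric inequality settles the off-diagonal cases and a single mod-$8$ computation ($t^2\equiv 1$, so $t^2-\Delta\equiv 1-m$) settles the boundary case $\nu_2(g(n))=\ell-1$. This bypasses both the level-by-level induction and the operator machinery, and your minimal-period argument (comparing $n_0$ with $n_0+2^{\ell-1}$) is self-contained, whereas the paper leans on Lemma~\ref{SuppLemm6} for periodicity and on the tree having exactly $\ell$ levels for the period length. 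What the paper's approach buys is the tree-structural description (the type-$(\ell,1)$ classification and the operator calculus, which are of independent interest in Section~\ref{SecStrucTree}); what yours buys is a shorter, one-pass derivation of the closed form. All of your case computations check out, including $\nu_2(\Delta)=1$ exactly for $m\in\{2,6\}$ by maximality of $\ell$, and the three values $\nu_2(t^2-\Delta)\in\{0,1,2\}$ for $m\in\{2,6\}$, $\{3,7\}$, $\{5\}$ respectively.
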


The case $\ell=1$ is covered by Lemma~\ref{FinLem1}. In this case, the sequence is periodic with period equal to 2. Theorem~\ref{MainThm2} is proved in Proposition~\ref{FinThm} and Corollary~\ref{FinCor}. Both of these results are an extension of the work by Byrnes et al.~\cite{Byrnes}, which only considered quadratics of the form $f(n)=an^{2}+c$. The work of Medina et al.~\cite{Medina} details conditions under which these sequences are bounded or unbounded for general primes but we extend these results for $p=2$ by providing the exact conditions on the coefficients of quadratic equations. Furthermore, we provide a closed form giving the exact valuation for the bounded sequences relying only on the coefficients of the quadratic function. Boundedness of $p$-adic valuations of polynomial sequences is also discussed in Bell's work \cite{Bell}.

\section{Parity and trees}\label{SecParityandTrees}

Consider a quadratic function of the form $f(n)=an^2+bn+c,$ where $a,b$ and $c$ are integers and $a$ is nonzero. To prove the results stated in Theorems~\ref{MainThm1} and~\ref{MainThm2}, we consider the eight possible cases based on the parity of the coefficients $a$, $b$, and $c$. In the case where $a,b,$ and $c$ are all even, there exists an $i\in\mathbb{N}$ such that $2^{i}$ divides $a,b$, and $c$ but $2^{i+1}$ does not. Then $f(n)=2^{i}(a_{0}n^{2}+b_{0}n+c_{0})$ and 
it follows that $\nu_{2}(f(n))=i+\nu_{2}(a_{0}n^{2}+b_{0}n+c_{0})$. Hence, this case can be reduced to one of the other seven cases. So we assume, unless stated otherwise, that $a$, $b$, and $c$ are not all even.

Two more cases of Theorem~\ref{MainThm1} are trivial (Case 1 where $a,b$ are even, and Case 5, where $a,b,$ and $c$ are odd), since $\nu_{2}(f(n))=0$ for all $n\in\mathbb{N}$. For the remaining five cases, we classify the behavior using trees. In the case that $a$ is odd and $b$ is even we show, with the help of the discriminant, that $f(n)=0$ has a root in $\mathbb{Q}_2$. We must take some care since some quadratics may not have a zero in $\mathbb{Q}_2$. 

As discussed in Section~\ref{SecIntro}, we can present information about the sequence of valuations using a tree. We begin the construction of the tree with the top node representing the valuation of the quadratic $f(n)$ evaluated at any natural number $n$. If the $2$-adic valuation is constant for every $n$ in this node, then we stop the construction, as $\nu_2(n)$ is completely determined for the sequence. If $\nu_2(n)$ is not constant, this node splits into two branches, where one branch represents all numbers of the form $n=2q$ and the other branch represents all numbers of the form $n=2q+1$, where in both cases $q\in\mathbb{N}$. We then repeat this step as necessary to create the tree. The nodes correspond to the sets  $\{2^{i}q+r_{i-1}|q\in\mathbb{N}\}$ where
\begin{equation}\label{2adicRemainder}
    r_{i-1}=\sum_{k=0}^{i-1}\alpha_{k}2^{k},
\end{equation}
for fixed coefficients $\alpha_{k}\in\{0,1\}$. This process does not always terminate. If it terminates, we say that the tree is \textit{finite}; otherwise, the tree is \textit{infinite}. We say a node is \textit{non-terminating} if $(\nu_{2}(f(n)))_{n\geq0}$ is non-constant for every $n$ in that equivalence class. We say a node is \textit{terminating} if $(\nu_{2}(f(n)))_{n\geq 0}$ is constant for every $n$ in that equivalence class. In practice, we label the node with this constant valuation.

\begin{figure}
\begin{center}
\begin{tikzpicture}[level 1/.style={sibling distance=4cm},
level 2/.style={sibling distance=6cm}]
\node at ([shift={(0cm,0cm)}]current page.north)[draw](x){$\nu_{2}(f(n))$}
child{node[draw] (a){$\nu_{2}(f(2q+1))$}
    child[grow=south,level distance=4cm] {node[draw] (c){$\nu_{2}(f(4q+3))$}}
    child[grow=south west,level distance=4cm] {node[draw] (d){$\nu_{2}(f(4q+1))$}}}
child {node[draw] (b){$\nu_{2}(f(2q))$}
    child[grow=south,level distance=4cm] {node[draw] (e){$\nu_{2}(f(4q+2))$}}
    child[grow=south east,level distance=4cm] {node[draw] (f){$\nu_{2}(f(4q))$}}}
;
\path (x)  edge node[fill=white] [draw=none, midway] {$2q+1$} (a);
\path (x) edge node[fill=white][draw=none, midway]{$2q$} (b);
\path (a)  edge node[fill=white] [draw=none, midway] {$4q+3$} (c);
\path (a) edge node[fill=white][draw=none, midway]{$4q+1$} (d);
\path (b)  edge node[fill=white] [draw=none, midway] {$4q+2$} (e);
\path (b) edge node[fill=white][draw=none, midway]{$4q$} (f);

\end{tikzpicture}
\end{center}
\caption{Levels 0, 1, and 2 of a tree.}
\end{figure}

For each of the remaining five nontrivial cases on the parity of the coefficients $a$, $b$ and $c$, either $(\nu_{2}(f(n)))_{n\geq0}$ produces a finite tree or an infinite tree. We say a finite tree has \textit{$\ell$ levels} if there exists $\ell\in\mathbb{Z}^{+}$ such that for all $r_{\ell-1}\in\{0,1,2,\ldots,2^{\ell}-1\}$ we have $(\nu_{2}(f(2^{\ell}q+r_{\ell-1})))_{q\geq 0}$ constant for all $q\in\mathbb{N}$, and $\ell$ is the smallest possible value. Every node at level $\ell$ in a finite tree has a constant valuation, which depends on $r_{\ell-1}$.

Each node of a tree represents a subsequence of the sequence of 2-adic valuations. A finite tree of $\ell$ levels represents a sequence with period equal to $2^{\ell}$.

In the literature, these finite trees are also called \textit{finite automata.} The sequences generated via the 2-adic valuation are called \textit{2-automatic sequences} and, in particular, the sequences $f(2^{i}q+r)$ are known as the \textit{2-kernel sequences.} See Allouche and Shallit's book~\cite{AlloucheShallit} and Bell's paper~\cite{Bell} for more details.

\subsection{2-adic numbers and selected lemmas}\label{SecLemmas}

First, we state several well-known lemmas. The first is a well-established fact about the $p$-adic valuation, which can also be defined on the set $\mathbb{Q}$ and extends to $\mathbb{Q}_2$; see Lemma 3.3.2 in~\cite{Gouvea}.

\begin{lemma}\label{SuppLemm1}
Let $x,y\in\mathbb{Q}$, then $\nu_{p}(xy)=\nu_{p}(x)+\nu_{p}(y)$.
\end{lemma}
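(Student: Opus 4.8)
The plan is to reduce the claim on $\mathbb{Q}$ to the corresponding statement on $\mathbb{Z}$, where the valuation is just the exponent of the highest power of $p$ dividing an integer. First I would settle the integer case. Given nonzero integers $m$ and $n$, write $m=p^{\alpha}m'$ and $n=p^{\beta}n'$ with $\alpha=\nu_{p}(m)$, $\beta=\nu_{p}(n)$, and $p\nmid m'$, $p\nmid n'$. Then $mn=p^{\alpha+\beta}m'n'$, and the crucial observation is that $p\nmid m'n'$: since $p$ is prime, $p\mid m'n'$ would force $p\mid m'$ or $p\mid n'$, contradicting the choice of $m'$ and $n'$. Hence $\nu_{p}(mn)=\alpha+\beta=\nu_{p}(m)+\nu_{p}(n)$, which is the desired identity for integers.

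Next I would extend to $\mathbb{Q}$ using the standard definition $\nu_{p}(a/b)=\nu_{p}(a)-\nu_{p}(b)$ for $a,b\in\mathbb{Z}$ with $b\neq0$. Before invoking this, I would check that it is well-defined, independent of the chosen representative: if $a/b=a'/b'$ then $ab'=a'b$, so applying the integer case to both sides gives $\nu_{p}(a)+\nu_{p}(b')=\nu_{p}(a')+\nu_{p}(b)$, that is, $\nu_{p}(a)-\nu_{p}(b)=\nu_{p}(a')-\nu_{p}(b')$. Writing $x=a/b$ and $y=c/d$, I then have $xy=ac/(bd)$, so by the definition together with the integer case,
\begin{equation*}
\nu_{p}(xy)=\nu_{p}(ac)-\nu_{p}(bd)=\bigl(\nu_{p}(a)+\nu_{p}(c)\bigr)-\bigl(\nu_{p}(b)+\nu_{p}(d)\bigr)=\nu_{p}(x)+\nu_{p}(y).
\end{equation*}

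Finally I would dispose of the degenerate case in which $x=0$ or $y=0$. By the convention $\nu_{p}(0)=+\infty$ adopted in the introduction, $xy=0$, so $\nu_{p}(xy)=+\infty$, while the right-hand side is $+\infty$ plus a (possibly finite or infinite) quantity, which also equals $+\infty$ under the usual extended-arithmetic convention; the identity therefore holds in this case as well. I do not anticipate any genuine obstacle: the content of the proof rests entirely on the appeal to primality guaranteeing $p\nmid m'n'$, and the remaining work—verifying that the rational extension is well-defined and handling the zero convention—is routine bookkeeping.
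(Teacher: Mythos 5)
Your proof is correct and complete. Note, however, that the paper does not actually prove this lemma: it records it as a well-known fact and cites Lemma 3.3.2 of Gouv\^{e}a's \emph{p-Adic Numbers: An Introduction}, so there is no in-paper argument to compare against. Your elementary treatment is exactly the standard one found in that reference: reduce to the integer case via the definition $\nu_{p}(a/b)=\nu_{p}(a)-\nu_{p}(b)$ (checking well-definedness), settle the integer case by Euclid's lemma applied to the $p$-free parts, and handle $x=0$ or $y=0$ by the convention $\nu_{p}(0)=+\infty$. The one point genuinely worth the care you gave it is the well-definedness of the extension to $\mathbb{Q}$, which is often skipped; your argument there is correct.
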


An element $n$ in $\mathbb{Q}_2$ can be represented in the form 
\begin{equation}\label{SuppLemma3.5}
    n=\sum_{i=k}^{\infty}\alpha_{i}2^{i}
\end{equation}
where $k=-\nu_{2}(n)$ and $\alpha_{i}\in\left\{0,1\right\}$ for all $i$; it is well-known that this representation is unique. 

Lemma~\ref{SuppLemm1} and the construction of $\mathbb{Q}_{2}$ are well-known~\cite{Gouvea}. Medina et al.~\cite{Medina} provide a useful characterization of the sequence of 2-adic valuations of a polynomial. Before we state the result, we recall the following characterization of the ring of 2-adic integers
\begin{equation*}
\mathbb{Z}_{2}=\left\{n\in\mathbb{Q}_{2}:n=\sum_{i=0}^{\infty}\alpha_{i}2^{i}\ \text{where}\ \alpha_{i}\in\left\{0,1\right\}\right\}.
\end{equation*}

\begin{lemma}\label{SuppLemm6}
(\cite{Medina}, Theorem 2.1) Let $f(n)\in\mathbb{Z}[n]$ be a polynomial that is irreducible over $\mathbb{Z}$. Then $(\nu_{2}(f(n)))_{n\geq0}$ is either periodic or unbounded. Moreover, $(\nu_{2}(f(n)))_{n\geq0}$ is periodic if and only if $f(n)$ has no zeros in $\mathbb{Z}_{2}$. In the periodic case, the minimal period length is a power of $2$.
\end{lemma}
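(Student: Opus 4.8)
The plan is to prove both the dichotomy and the root criterion at once by splitting on whether $f$ has a zero in the compact ring $\mathbb{Z}_2$. First I would fix notation: write $|x|_2 = 2^{-\nu_2(x)}$ for the $2$-adic absolute value, recall that $\mathbb{Z}_2$ is compact in its $2$-adic topology, and that polynomial evaluation $x\mapsto f(x)$ is $2$-adically continuous. The entire argument then reduces to analyzing the two cases ``$f$ has a zero $\alpha\in\mathbb{Z}_2$'' and ``$f$ has no zero in $\mathbb{Z}_2$.''

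For the first case, suppose $\alpha=\sum_{i\geq 0}\alpha_i 2^i\in\mathbb{Z}_2$ satisfies $f(\alpha)=0$. Since $\alpha\in\mathbb{Z}_2$ and $f$ has integer coefficients, dividing $f(x)$ by the monic linear factor $(x-\alpha)$ is legitimate and yields $f(x)=(x-\alpha)g(x)$ with $g\in\mathbb{Z}_2[x]$. I would then use the truncations $n_k=\sum_{i=0}^{k-1}\alpha_i 2^i$, which are genuine natural numbers satisfying $\nu_2(n_k-\alpha)\geq k$. Because $g$ has $2$-adic integer coefficients we have $\nu_2(g(n_k))\geq 0$, so by the multiplicativity of $\nu_2$ (Lemma~\ref{SuppLemm1}, extended to $\mathbb{Q}_2$) we obtain $\nu_2(f(n_k))=\nu_2(n_k-\alpha)+\nu_2(g(n_k))\geq k$. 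Letting $k\to\infty$ shows $(\nu_2(f(n)))_{n\geq 0}$ is unbounded whenever a zero in $\mathbb{Z}_2$ exists.

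For the second case, suppose $f$ has no zero in $\mathbb{Z}_2$. Here the map $x\mapsto |f(x)|_2$ is continuous and nowhere zero on the compact space $\mathbb{Z}_2$, hence attains a positive minimum $2^{-M}$; equivalently $\nu_2(f(n))\leq M$ for every $n\in\mathbb{N}\subset\mathbb{Z}_2$, so the sequence is bounded. To pass from boundedness to periodicity I would note that $n\equiv n'\pmod{2^{M+1}}$ implies $f(n)\equiv f(n')\pmod{2^{M+1}}$; since both valuations are at most $M<M+1$, each is determined by the residue of $f$ modulo $2^{M+1}$, forcing $\nu_2(f(n))=\nu_2(f(n'))$. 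Thus $2^{M+1}$ is a period. Because the minimal period divides any period, it divides $2^{M+1}$ and is therefore itself a power of $2$. Combining the two cases gives the full statement: periodic exactly when there is no zero in $\mathbb{Z}_2$, unbounded otherwise, and a power-of-two minimal period in the periodic case.

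The hard part will be the compactness step in the second case, namely justifying that $|f|_2$ is bounded away from $0$ on $\mathbb{Z}_2$; this is the one genuinely analytic input and relies on compactness of $\mathbb{Z}_2$ together with continuity of polynomial evaluation. Irreducibility enters as a clean hypothesis that makes the two cases exhaustive and the factorization honest: over $\mathbb{Q}$ an irreducible polynomial is separable, so any root in $\mathbb{Z}_2$ is simple and the factor $g$ is not the zero polynomial, while irreducibility of degree at least two also rules out integer roots, ensuring no term of the sequence equals $+\infty$. I would flag that the compactness argument itself does not require irreducibility, so the substance of the lemma is the root-in-$\mathbb{Z}_2$ criterion rather than irreducibility per se.
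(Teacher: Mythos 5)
Your argument is correct, but note that the paper does not prove this lemma at all: it is imported verbatim by citation from Medina, Moll, and Rowland (\cite{Medina}, Theorem 2.1), so there is no internal proof to compare against. What you have written is essentially the standard proof of that cited theorem, and every step checks out. In the unbounded direction, the division $f(x)=(x-\alpha)g(x)$ does land in $\mathbb{Z}_2[x]$ precisely because the divisor is monic with coefficients in $\mathbb{Z}_2$, and then $\nu_2(f(n_k))\geq \nu_2(n_k-\alpha)\geq k$ for the truncations $n_k$ is exactly right; separability and simplicity of the root are not actually needed here (only $g\neq 0$, which is automatic), so that remark is harmless but beside the point. In the bounded direction, the compactness argument giving a uniform bound $M$ is the one genuine analytic input, as you say, and the passage to periodicity is clean: if $\nu_2(x)\leq M$ and $x\equiv y\pmod{2^{M+1}}$, the ultrametric inequality forces $\nu_2(y)=\nu_2(x)$, so $2^{M+1}$ is a period; since the set of periods of a purely periodic sequence is closed under gcd, the minimal period divides $2^{M+1}$ and is a power of $2$. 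One small thing worth making explicit if this were to be written up: the two cases really are exhaustive and exclusive because a periodic sequence of finite valuations is bounded, which is what converts your dichotomy (zero in $\mathbb{Z}_2$ implies unbounded; no zero implies periodic) into the stated ``if and only if.'' Your closing observation that irreducibility is not what drives the argument --- it merely guarantees no integer roots (so no $+\infty$ terms) and lets the paper reduce the reducible case via Lemma~\ref{SuppLemm1} --- is accurate and consistent with how the paper uses the lemma.
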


We assume that the quadratic $f(n)$ is irreducible because, if not, by Lemma~\ref{SuppLemm1}, $$\nu_p(f(n))=\nu_p(g(n)\cdot h(n))=\nu_p(g(n))+\nu_p(h(n)),$$ where $g(n)$ and $h(n)$ are irreducible.

Therefore, to determine whether $(\nu_{2}(f(n)))_{n\geq0}$ is periodic or unbounded, it suffices to determine if $f(n)$ has zeros in $\mathbb{Q}_2$ and then determine whether the zeros are also in $\mathbb{Z}_{2}$.
The following lemmas will be used in Section~\ref{SecInf} to identify when the square root of a number is in $\mathbb{Z}_{2}$. The version of Hensel's lemma stated below determines when a polynomial in $\mathbb{Z}_{2}[x]$ has zeros in $\mathbb{Z}_{2}$. Lemma~\ref{roots}, which follows from Lemma~\ref{Hensel}, specifically determines whether the polynomial $f(x)=x^{2}-a$ has zeros in $\mathbb{Z}_{2}$.

\begin{lemma}\label{Hensel}
    (Hensel's lemma,~\cite[Sec.~6.4]{Robert}) Assume that $P\in\mathbb{Z}_{2}[x]$ and $x_{0}\in\mathbb{Z}_{2}$ satisfies \begin{equation*}
        P(x_{0})\equiv\modd{0} {2^{n}}
    \end{equation*}
    If $\phi=\nu_{2}(P'(x_{0}))<n/2$, then there exists a unique zero $\xi$ of $P$ in $\mathbb{Z}_{2}$ such that
    \begin{equation*}
        \xi\equiv\modd{x_{0}} {p^{n-\phi}}\ \text{and}\ \nu_{2}(P'(\xi))=\nu_{2}(P'(x_{0}))
    \end{equation*}
\end{lemma}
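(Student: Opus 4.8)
The statement is the quantitative (Newton) form of Hensel's lemma, and the plan is to establish it by a 2-adic Newton iteration, exploiting the completeness of $\mathbb{Z}_2$. The central tool is the finite Taylor expansion of a polynomial with integer (hence 2-adic integer) coefficients: for any $a,h\in\mathbb{Z}_2$ one may write $P(a+h)=P(a)+P'(a)h+\sum_{j\geq 2}c_j(a)h^j$, where each $c_j(a)\in\mathbb{Z}_2$. The point of this expansion is that the linear approximation $P(a)+P'(a)h$ controls $P(a+h)$ up to an error of valuation at least $2\nu_2(h)$, and likewise $P'(a+h)\equiv P'(a)\pmod{2^{\nu_2(h)}}$.

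First I would define the Newton sequence $x_{k+1}=x_k-P(x_k)/P'(x_k)$ with $x_0$ as given, and prove by induction the two invariants
\[
\nu_2(P'(x_k))=\phi \qquad\text{and}\qquad \nu_2(P(x_k))\geq 2^{k}(n-2\phi)+2\phi.
\]
For the base step, the hypothesis $\nu_2(P(x_0))\geq n>2\phi=2\nu_2(P'(x_0))$ guarantees that the Newton increment $h_0=-P(x_0)/P'(x_0)$ lies in $\mathbb{Z}_2$ with $\nu_2(h_0)=\nu_2(P(x_0))-\phi\geq n-\phi>\phi$, so that $x_1\in\mathbb{Z}_2$. Substituting $h_0$ into the Taylor expansion, the linear term cancels $P(x_0)$ exactly, leaving $\nu_2(P(x_1))\geq 2\nu_2(h_0)\geq 2(n-\phi)$, which is the quadratic convergence; and since $\nu_2(h_0)>\phi$, the expansion of $P'$ gives $\nu_2(P'(x_1))=\phi$. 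The inductive step repeats this verbatim, the inequality $\phi<\nu_2(P(x_k))/2$ being preserved at every stage so that each Newton step is legitimate.

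Next I would deduce convergence: the invariants give $\nu_2(x_{k+1}-x_k)=\nu_2(P(x_k))-\phi\to\infty$, so $(x_k)$ is Cauchy and, by completeness of $\mathbb{Z}_2$, converges to some $\xi\in\mathbb{Z}_2$. Continuity of $P$ together with $\nu_2(P(x_k))\to\infty$ forces $P(\xi)=0$; the uniform bound $\nu_2(x_k-x_0)\geq n-\phi$ passes to the limit to give the congruence $\xi\equiv x_0\pmod{2^{\,n-\phi}}$, and since $n-\phi>\phi$ the same bound yields $\nu_2(P'(\xi))=\nu_2(P'(x_0))=\phi$. Finally, for uniqueness I would take a second root $\xi'$ in the same residue ball, set $e=\xi'-\xi$ with $\nu_2(e)\geq n-\phi>\phi$, and expand $0=P(\xi')=P'(\xi)\,e+\sum_{j\geq 2}c_j(\xi)e^{j}$; comparing valuations forces $\phi+\nu_2(e)\geq 2\nu_2(e)$, i.e.\ $\nu_2(e)\leq\phi$, contradicting $\nu_2(e)>\phi$ unless $e=0$.

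I expect the main obstacle to be bookkeeping the two valuations simultaneously: one must verify that $\nu_2(P'(x_k))$ stays exactly $\phi$ (not merely $\geq\phi$) throughout, since it is precisely this stability that keeps each Newton increment in $\mathbb{Z}_2$ and makes the hypothesis $\phi<\nu_2(P(x_k))/2$ self-propagating. Justifying the integrality of the Taylor coefficients $c_j(a)$ for $P\in\mathbb{Z}_2[x]$ (so that all error terms genuinely have the claimed valuations) is the other point requiring care, though it reduces to the observation that these coefficients are integral linear combinations of the coefficients of $P$.
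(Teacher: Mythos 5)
The paper offers no proof of this lemma: it is quoted verbatim as a known result from Robert's \emph{A Course in p-adic Analysis} (Sec.~6.4), so there is nothing internal to compare your argument against. Your Newton-iteration proof is correct and is, in substance, the standard argument (and essentially the one in the cited reference): the integrality of the divided Taylor coefficients $c_j(a)$, the two invariants $\nu_2(P'(x_k))=\phi$ and $\nu_2(P(x_k))\geq 2^k(n-2\phi)+2\phi$, the Cauchy/completeness step, and the ultrametric uniqueness argument all check out, with the key inequality $n-2\phi\geq 1$ making the iteration self-propagating exactly as you describe. The only cosmetic remark is that the paper's statement contains a typo ($p^{n-\phi}$ should read $2^{n-\phi}$), which your proof implicitly and correctly repairs.
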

\begin{lemma}\label{roots}
(\cite[Sec.~6.6]{Robert}) The function $f(x)=x^{2}-a$ has a zero in $\mathbb{Z}_{2}^{\times}$, the set of invertible elements of $\mathbb{Z}_{2}$, if and only if $a\equiv\modd{1} {8}$.
\end{lemma}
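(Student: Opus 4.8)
The plan is to prove both directions of the biconditional, treating necessity by an elementary parity argument and sufficiency as a direct application of Hensel's lemma (Lemma~\ref{Hensel}). Throughout, set $P(x)=x^{2}-a$, so that $P'(x)=2x$.

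For the necessity direction, I would assume that $\xi\in\mathbb{Z}_{2}^{\times}$ satisfies $\xi^{2}=a$. Since $\xi$ is a unit, it is an odd $2$-adic integer, so I would write $\xi=1+2t$ with $t\in\mathbb{Z}_{2}$ and expand $\xi^{2}=1+4t(1+t)$. Because exactly one of $t$ and $1+t$ is even in $\mathbb{Z}_{2}$, the product $t(1+t)$ is divisible by $2$, so $4t(1+t)\equiv 0\pmod{8}$ and hence $a=\xi^{2}\equiv 1\pmod{8}$. (Equivalently, one checks that each odd residue $1,3,5,7$ squares to $1$ modulo $8$.)

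For the sufficiency direction, I would assume $a\equiv 1\pmod{8}$ and apply Hensel's lemma with the approximate root $x_{0}=1$ and $n=3$. Then $P(1)=1-a\equiv 0\pmod{2^{3}}$, while $P'(1)=2$ gives $\phi=\nu_{2}(P'(1))=1$. The hypothesis $\phi<n/2$ holds since $1<3/2$, so Lemma~\ref{Hensel} produces a unique zero $\xi$ of $P$ in $\mathbb{Z}_{2}$ with $\xi\equiv 1\pmod{2^{\,n-\phi}}=\pmod{4}$. In particular $\xi$ is odd, hence $\xi\in\mathbb{Z}_{2}^{\times}$, and $\xi^{2}=a$ exhibits the desired square root in the units.

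This argument is essentially routine, so the only real care needed is bookkeeping: verifying that the chosen $x_{0}$ and $n$ satisfy the precise inequality $\phi<n/2$ in Lemma~\ref{Hensel}, and observing that the congruence $\xi\equiv 1\pmod{4}$ supplied by Hensel's lemma forces $\xi$ to be a unit rather than merely an element of $\mathbb{Z}_{2}$. The latter point is what distinguishes ``zero in $\mathbb{Z}_{2}^{\times}$'' from ``zero in $\mathbb{Z}_{2}$'' and is exactly what pins down the modulus $8$ in the statement.
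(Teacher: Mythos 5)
Your proof is correct. The paper does not actually prove this lemma---it only cites Robert, Sec.\ 6.6---but it does remark that the result ``follows from Lemma~\ref{Hensel},'' and your argument (necessity by the elementary observation that every odd square is $1$ modulo $8$, sufficiency by applying Hensel's lemma with $x_{0}=1$, $n=3$, $\phi=\nu_{2}(2)=1$, so that the root satisfies $\xi\equiv 1 \pmod{4}$ and is therefore a unit) is exactly the standard derivation the citation points to.
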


\section{Proof of Theorem~\ref{MainThm1}: unbounded cases and infinite trees}\label{SecInf}

We now prove Theorem~\ref{MainThm1}. The main idea is to describe the roots to $f(n)=0$ in $\mathbb{Q}_{2}$ simply using the quadratic formula, the parity of the coefficients, and the lemmas presented in Section~\ref{SecLemmas}. Moreover, according to Lemma~\ref{SuppLemm6}, if a zero exists in $\mathbb{Z}_{2}$, it manifests as an infinite branch in the tree. We begin with Cases 2 and 4. 

To this end, note that in Case 2, we can write $a=2r$ and $b=2k+1$ for some $r,k\in\mathbb{Z}$. Then $an^{2}+bn+c=0$ has roots of the form 
\begin{equation}\label{rootform1}
x=\frac{-2k-1\pm\sqrt{1-8(rc-\beta)}}{4r},
\end{equation} where $\beta=(k^{2}+k)/2$. Set $j=rc-\beta$.

Also, in Case 4, we can write $a=2r+1$, $b=2k+1$, and $c=2p$. Then $an^{2}+bn+c=0$ has roots of the form
\begin{equation}\label{rootform2}
x=\frac{-2k-1\pm\sqrt{1-8((2r+1)p-\beta)}}{2(2r+1)},
\end{equation} where $\beta=(k^2+k)/2$. Set $j=(2r+1)p-\beta$. Observe that in either case the roots contain $\sqrt{1-8j}$ where $j\in\mathbb{Z}$. Since $\sqrt{1-8j}$ is a zero of the function $g(x)=x^{2}-(1-8j)$, by Lemma~\ref{roots} the zero is in $\mathbb{Z}_{2}$.

Notice that both roots \eqref{rootform1} and \eqref{rootform2} have an even denominator. We still need to check if these roots are in $\mathbb{Q}_{2}$ or $\mathbb{Z}_{2}$.  Therefore, in light of Lemma~\ref{SuppLemm6}, Case 2 (Proposition~\ref{InfCase1}) and Case 4 (Proposition~\ref{InfCase2}) are proved by an inductive argument on the behavior of the tree. It turns out that, in Case 2, $f(n)$ has exactly one zero in $\mathbb{Z}_2$ and in Case 4, $f(n)$ has two zeros in $\mathbb{Z}_2$. See Figure~\ref{fig:Ex8} in the Appendix for an example of a tree with one infinite branch and Figure~\ref{fig:Ex7} for an example of a tree with two infinite branches.

\begin{proposition}\label{InfCase1}
If $a$ is even and $b$ is odd, then the 2-adic valuation tree of $f(n)=an^2+bn+c$ has exactly one infinite branch. Furthermore, the valuation of the terminating node at the $i^{th}$ level is $i-1$.
\end{proposition}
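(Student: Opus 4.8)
The plan is to work directly with the factorization of $f$ over $\mathbb{Q}_2$ and then translate the resulting valuation identity into the language of the tree. By the quadratic formula \eqref{rootform1}, the two roots are $x_\pm = (-2k-1 \pm u)/(4r)$, where $u = \sqrt{1-8j} \in \mathbb{Z}_2^{\times}$ is the $2$-adic unit supplied by Lemma~\ref{roots}. Writing $s = \nu_2(a) \geq 1$, so that $\nu_2(4r) = \nu_2(2a) = 1 + s$, I would first determine which root lies in $\mathbb{Z}_2$. The two numerators $n_\pm = -2k-1 \pm u$ are both even, while their sum $n_+ + n_- = -2(2k+1)$ has valuation exactly $1$; the ultrametric inequality then forces exactly one numerator to have valuation $1$ and the other to have valuation at least $2$. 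Hence the corresponding root $\xi'$ satisfies $\nu_2(\xi') = 1 - (1+s) = -s < 0$ and lies outside $\mathbb{Z}_2$.

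Next I would identify the second root $\xi$ and show it is a $2$-adic integer. Rather than estimating its numerator directly (which is awkward when $s \geq 2$), I would use that the product of the roots is $\xi\xi' = c/a$, so by Lemma~\ref{SuppLemm1}, $\nu_2(\xi) = \nu_2(c/a) - \nu_2(\xi') = (\nu_2(c) - s) - (-s) = \nu_2(c) \geq 0$. Thus $\xi \in \mathbb{Z}_2$, and $f$ has exactly one zero in $\mathbb{Z}_2$; by Lemma~\ref{SuppLemm6} this is exactly one infinite branch in the tree, giving the first assertion.

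For the valuation claim, I would factor $f(n) = a(n-\xi)(n-\xi')$ and apply Lemma~\ref{SuppLemm1} to obtain $\nu_2(f(n)) = \nu_2(a) + \nu_2(n-\xi) + \nu_2(n-\xi')$ for every $n \in \mathbb{N} \subset \mathbb{Z}_2$. Since $\nu_2(n) \geq 0 > -s = \nu_2(\xi')$, the ultrametric inequality makes the last term constant, equal to $\nu_2(\xi') = -s$; the leading and trailing constants cancel, leaving the clean identity $\nu_2(f(n)) = \nu_2(n-\xi)$. Writing $\xi = \sum_{i \geq 0}\alpha_i 2^i$ for its unique expansion \eqref{SuppLemma3.5}, the level-$i$ node recorded by \eqref{2adicRemainder} is non-terminating exactly when $r_{i-1} \equiv \xi \pmod{2^i}$, while its sibling $n \equiv \xi + 2^{i-1} \pmod{2^i}$ satisfies $\nu_2(n - \xi) = i - 1$ identically and therefore terminates with valuation $i - 1$, as claimed.

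The main obstacle I anticipate is the intermediate step of showing that exactly one root lies in $\mathbb{Z}_2$ while simultaneously controlling $\nu_2(\xi')$ for arbitrary $s = \nu_2(a)$: a naive count of numerator valuations only yields $\nu_2(\xi) \geq 1 - s$, which is insufficient when $a$ is divisible by $4$. Routing the argument through the product of roots sidesteps this and is precisely what makes the additive constant $\nu_2(a) + \nu_2(\xi') = s + (-s)$ collapse to $0$, which is exactly what forces the terminating valuation to equal $i-1$ with no extra shift. The remaining bookkeeping --- that terminated nodes never re-split and that the unique infinite branch follows the bits of $\xi$ --- is routine once the identity $\nu_2(f(n)) = \nu_2(n - \xi)$ is in hand.
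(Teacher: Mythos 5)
Your proof is correct, but it takes a genuinely different route from the paper's. The paper argues by induction on the levels of the tree: writing $f(n)=2(rn^{2}+kn)+n+c$, it checks the parity split at level $1$ and then shows that a non-terminating node $n=2^{i}q+r_{i-1}$ with $f(r_{i-1})=2^{i}\beta$ produces two children whose values are congruent to $2^{i}\beta$ and $2^{i}(\beta+b)$ modulo $2^{i+1}$; since $b$ is odd, exactly one child terminates with valuation $i$ and the other continues. You instead factor $f$ over $\mathbb{Q}_{2}$, use the valuations of the numerators together with Vieta's relation $\xi\xi'=c/a$ to show that exactly one root $\xi$ lies in $\mathbb{Z}_{2}$ while the other satisfies $\nu_{2}(\xi')=-\nu_{2}(a)$, and then collapse $\nu_{2}(f(n))=\nu_{2}(a)+\nu_{2}(n-\xi)+\nu_{2}(n-\xi')$ to the identity $\nu_{2}(f(n))=\nu_{2}(n-\xi)$. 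This buys strictly more than the paper's argument: it yields all the valuations in closed form at once and exhibits the infinite branch explicitly as the binary expansion of $\xi$, something the paper only describes implicitly in the remark at the end of Section~\ref{SecInf}; and your Vieta step is precisely what is needed when $\nu_{2}(a)\geq 2$, where a direct estimate on the numerator of $\xi$ is too weak. The paper's induction, by contrast, is entirely elementary and needs neither Lemma~\ref{roots} nor any analysis of the roots. One small point of rigor: to conclude that exactly one of the numerators $-2k-1\pm u$ has valuation $1$, the bare ultrametric inequality only excludes both having valuation at least $2$; to exclude both having valuation exactly $1$ you should also invoke the $2$-adic fact that two elements of equal valuation sum to an element of strictly larger valuation, or simply observe that $n_{+}n_{-}=(2k+1)^{2}-(1-8j)=4(k^{2}+k+2j)\equiv 0 \pmod 8$, so the two valuations sum to at least $3$. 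This is a one-line repair and does not affect the correctness of your argument.
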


\begin{proof}
Note that this Proposition corresponds to Case 2 of Theorem~\ref{MainThm1}. Substituting $a=2r$ and $b=2k+1$ for some $r,k\in\mathbb{Z}$, we get $an^2+bn+c=2(rn^2+kn)+n+c$. Now suppose that $c$ is even. If $n$ is even, then $2(rn^{2}+kn)+n+c$ is divisible by 2 and so $\nu_{2}(f(2n))\geq1$. If $n$ is odd, then $2(rn^{2}+kn)+n+c$ is not divisible by 2 and so $\nu_{2}(f(2n+1))=0$. An analogous argument shows that, for $c$ odd, $\nu_{2}(f(2n))=0$ and $\nu_{2}(f(2n+1))\geq1$. Therefore, the conclusion of the proposition is valid at the initial step.

Now, arguing inductively, suppose that $n=2^{i}q+r_{i-1}$ is the non-terminating node, that is $\nu_{2}(f(n))\geq i$. So $f(n)\equiv\modd{0} {2^{i}}$ or $a(2^{i}q+r_{i-1})^{2}+b(2^{i}q+r_{i-1})+c=2^{i}\beta$ where $\beta\in\mathbb{Z}$. Consider $f(n)$ evaluated at the next level:
$$a(2^{i+1}q+r_{i-1})^{2}+b(2^{i+1}q+r_{i-1})+c\equiv ar_{i-1}^{2}+br_{i-1}+c
\equiv \modd{2^{i}\beta} {2^{i+1}},$$ and
\begin{align*}
a(2^{i+1}q+2^{i}+r_{i-1})^{2}  + b(2^{i+1}q+2^{i}+r_{i-1})+c 
&\equiv ar_{i-1}^{2}+2^{i}b+br_{i-1}+c \\
& \equiv 2^{i}\beta+2^{i}b \equiv \modd{2^{i}(\beta+b)} {2^{i+1}}. 
\end{align*}
Since $b$ is odd it follows that the valuation of one node is $i$ and the other is greater than $i$ depending on if $\beta$ is odd or even. Therefore one node terminates and the other is non-terminating.
\end{proof}

\begin{proposition}\label{InfCase2}
If $a$ and $b$ are odd, and $c$ is even, then the 2-adic valuation tree of $f(n)=an^2+bn+c$ has two infinite branches. Furthermore, the valuation of the terminating node at the $i^{th}$ level is $i$.
\end{proposition}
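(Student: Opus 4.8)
The plan is to establish the two assertions separately: first that $f$ has two zeros in $\mathbb{Z}_2$, which by Lemma~\ref{SuppLemm6} forces two infinite branches, and then the terminating valuations by an induction that runs the argument of Proposition~\ref{InfCase1} on both branches at once. For the first part I would use the root expression already recorded for this case: writing $a=2r+1$, $b=2k+1$, $c=2p$, the roots are $x=\frac{-(2k+1)\pm\sqrt{1-8j}}{2(2r+1)}$ with $j=(2r+1)p-\beta$ and $\sqrt{1-8j}\in\mathbb{Z}_2$ by Lemma~\ref{roots}, since $1-8j\equiv 1\pmod 8$. The key observation is that now \emph{both} roots lie in $\mathbb{Z}_2$: the denominator $2(2r+1)$ has valuation $1$, while each numerator $-(2k+1)\pm\sqrt{1-8j}$ is a sum or difference of two odd $2$-adic integers and hence has valuation at least $1$, so both quotients are $2$-adic integers. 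Furthermore the two roots differ by $\frac{\sqrt{1-8j}}{2r+1}$, which is a unit, so they are incongruent modulo $2$; one root is even and the other odd. Lemma~\ref{SuppLemm6} then yields exactly two infinite branches.

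For the valuations I would induct on the level as in Proposition~\ref{InfCase1}, but tracking the two non-terminating nodes simultaneously. Since $a,b$ are odd and $c$ is even, $f(n)\equiv n^2+n\equiv 0\pmod 2$ for every $n$, so $\nu_2(f(n))\ge 1$ throughout; in particular neither level-$1$ node terminates, and these two nodes are precisely the ones containing the even and the odd root. The inductive step is essentially verbatim from Proposition~\ref{InfCase1}: if $n=2^{i}q+r_{i-1}$ is non-terminating with $f(n)\equiv 2^{i}\beta\pmod{2^{i+1}}$, then evaluating $f$ at its two children $2^{i+1}q+r_{i-1}$ and $2^{i+1}q+2^{i}+r_{i-1}$ modulo $2^{i+1}$ produces $2^{i}\beta$ and $2^{i}(\beta+b)$, and because $b$ is odd exactly one of these is $\equiv 0\pmod{2^{i+1}}$. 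Hence one child continues and the other terminates with valuation exactly $i$. Applying this to each of the two non-terminating nodes at every level gives two continuing and two terminating nodes at each subsequent level; since termination first occurs at level $2$, the $i$-th level at which terminating nodes appear carries valuation $i$, which is the assertion of the proposition.

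The main obstacle, relative to Proposition~\ref{InfCase1}, is the simultaneous bookkeeping of two infinite branches: I must verify that the two non-terminating nodes at a given level are genuinely distinct and that each splits into exactly one continuing and one terminating child. This is precisely where distinctness of the roots modulo every power of $2$ (their difference being a unit) is used, guaranteeing that the two branches occupy different residue classes at every level and never merge. A secondary technical point is strengthening the inductive hypothesis from the bound $\nu_2(f(n))\ge i$ to the exact congruence $f(n)\equiv 2^{i}\beta\pmod{2^{i+1}}$ together with control of the parity of $\beta$, since it is this refinement, rather than the mere lower bound, that pins the terminating valuation to an exact value.
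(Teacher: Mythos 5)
Your proof is correct and follows essentially the same route as the paper: the paper likewise verifies levels 1 and 2 by direct computation and then runs the inductive splitting argument of Proposition~\ref{InfCase1} on both branches, while your explicit check that both roots $\frac{-(2k+1)\pm\sqrt{1-8j}}{2(2r+1)}$ lie in $\mathbb{Z}_2$ and are incongruent modulo $2$ merely makes precise what the paper only sketches in the preamble to Section~\ref{SecInf}. One small remark: under the standard level indexing the terminating valuation at level $i$ comes out as $i-1$ (the paper's own base case gives $\nu_2=1$ at the terminating node among $4q,4q+2$), so your closing reindexing of ``the $i$-th level at which terminating nodes appear'' is the reading needed to reconcile the computation with the proposition as stated.
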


\begin{proof}
This proposition addresses Case 4 of Theorem~\ref{MainThm1}. Write $a=2r+1$, $b=2k+1$, and $c=2p$ for some integers $r,k,$ and $p$. First note that both $a(2q)^{2}+b(2q)+c$ and $a(2q+1)^{2}+b(2q+1)+c$ are congruent to $\modd{0} {2}$. We now verify that the proposition holds at the initial step.

In the $2q$ case, we check $4q$ and $4q+2$. Note that
$$a(4q)^{2}+b(4q)+c\equiv \modd{c} {4}$$
and
$$a(4q+2)^{2}+b(4q+2)+c\equiv\modd{ 2b+c} {4}.$$ 
If $c\equiv\modd{0} {4}$, then $2b+c\not\equiv\modd{0} {4}$. If $c\not\equiv\modd{0} {4}$ then $c=2p$ with $p$ odd and $2b+c=2(b+p)\equiv\modd{0} {4}$. That is, either $$\nu_{2}(f(4q))\geq2 \text{ and }\nu_{2}(f(4q+2))=1,\text{ or}$$ $$\nu_{2}(f(4q))=1 \text{ and }\nu_{2}(f(4q+2))\geq2.$$ For the $2q+1$ case, we check $4q+1$ and $4q+3$. 
Note that $$a(4q+1)^{2}+b(4q+1)+c\equiv\modd{a+b+c} {4}$$
and $$a(4q+3)^{2}+b(4q+3)+c\equiv\modd{a+3b+c} {4}.$$ 
By hypothesis, $a+b+c=2(r+k+p)$ and $a+3b+c=2(r+3k+p+2)$. But note that $r+3k+p+2=(r+k+p+1)+(2k+1)$. Now it is clear that $r+3k+p+2$ is even (odd) if and only if $r+k+p+1$ is odd (even). Again, either $$\nu_{2}(f(4q+1))\geq2\text{ and }\nu_{2}(f(4q+3))=1\text{, or}$$ $$\nu_{2}(f(4q+1))=1\text{ and } \nu_{2}(f(4q+3))\geq2.$$

For the inductive step, now suppose that 
$n=2^{i}q+r_{i-1}$ and $n=2^{i}q+r_{i-1}^{*}$ are the non-terminating nodes where $r_{i-1}=\sum_{k=1}^{i-1}\alpha_{k}2^{k}+1$ (the odd side branch) and $r_{i-1}^{*}=\sum_{k=1}^{i-1}\alpha_{k}2^{k}$ (the even side branch) where $\alpha_{k}\in\left\{0, 1\right\}$. The fact that these branches are non-terminating follows from the same argument as in the proof of Proposition~\ref{InfCase1}.
\end{proof}

We now consider Case 3(b) of Theorem~\ref{MainThm1}.

\begin{proposition}\label{InfSpecThm}
Let $a$ be odd, $b$ be even and $b^{2}-4ac=4^{\ell}\Delta$ for some $\ell\in\mathbb{Z}^+$ as large as possible and $\Delta\equiv\modd{1} {8}$, then the 2-adic valuation tree of $f(n)=an^2+bn+c$ has two infinite branches.
\end{proposition}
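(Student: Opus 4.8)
The plan is to locate the zeros of $f(n)=0$ explicitly in $\mathbb{Q}_2$ and show that both of them in fact lie in $\mathbb{Z}_2$; since (by Lemma~\ref{SuppLemm6}) a zero in $\mathbb{Z}_2$ is precisely what produces a non-terminating branch, two distinct zeros will yield the two infinite branches claimed. First I would apply the quadratic formula and substitute the hypothesis $b^{2}-4ac=4^{\ell}\Delta$:
\begin{equation*}
x=\frac{-b\pm\sqrt{b^{2}-4ac}}{2a}=\frac{-b\pm 2^{\ell}\sqrt{\Delta}}{2a}.
\end{equation*}
Because $b$ is even, I write $b=2\beta$ with $\beta\in\mathbb{Z}$; since $\ell\geq1$, this collapses to $x=a^{-1}\bigl(-\beta\pm 2^{\ell-1}\sqrt{\Delta}\bigr)$.

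Next I would verify that both roots actually lie in $\mathbb{Z}_2$. As $\Delta\equiv\modd{1} {8}$, Lemma~\ref{roots} shows $\sqrt{\Delta}\in\mathbb{Z}_2^{\times}$, so $2^{\ell-1}\sqrt{\Delta}\in\mathbb{Z}_2$ (here $\ell-1\geq0$ is essential). Since $a$ is odd it is a unit in $\mathbb{Z}_2$, hence $a^{-1}\in\mathbb{Z}_2$, and $\beta\in\mathbb{Z}\subseteq\mathbb{Z}_2$; therefore both $-\beta\pm 2^{\ell-1}\sqrt{\Delta}$, and hence both roots $\xi_1,\xi_2$, belong to $\mathbb{Z}_2$. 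The two roots are moreover distinct, since their difference is $a^{-1}\cdot 2^{\ell}\sqrt{\Delta}$, a nonzero element of $\mathbb{Z}_2$ as $\sqrt{\Delta}$ is a unit.

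It then remains to translate ``two distinct zeros in $\mathbb{Z}_2$'' into ``two infinite branches,'' which I expect to be the main obstacle. Since $a$ is a unit in $\mathbb{Z}_2$, I would factor $f(n)=a(n-\xi_1)(n-\xi_2)$ and use Lemma~\ref{SuppLemm1} to get $\nu_2(f(n))=\nu_2(n-\xi_1)+\nu_2(n-\xi_2)$ for $n\in\mathbb{Z}_2$. Tracing the branch whose residues $r_{i-1}$ from~\eqref{2adicRemainder} agree with the $2$-adic digits of $\xi_1$ makes $\nu_2(n-\xi_1)$ arbitrarily large while $\nu_2(n-\xi_2)$ stays bounded (as $\xi_1\neq\xi_2$ disagree at some fixed digit), so the minimal valuation on the node $\{2^{i}q+r_{i-1}\}$ grows with $i$ and the branch never terminates; the same holds for $\xi_2$, and the two branches are distinct because $\xi_1$ and $\xi_2$ first differ in some digit and then split. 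Conversely, any natural number lying $2$-adically far from both roots has bounded valuation, so a quadratic can support no third infinite branch. The delicate points are precisely this bookkeeping — showing that non-termination persists at every level of each branch and that exactly two branches survive — both of which reduce to the same splitting/termination analysis already carried out in Propositions~\ref{InfCase1} and~\ref{InfCase2}.
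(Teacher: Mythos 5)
Your proposal is correct and follows essentially the same route as the paper: apply the quadratic formula, use Lemma~\ref{roots} to place $\sqrt{\Delta}\in\mathbb{Z}_2^{\times}$, and observe that the odd denominator $a$ keeps both roots in $\mathbb{Z}_2$, giving one infinite branch per root. Your closing paragraph, which justifies the passage from two distinct zeros in $\mathbb{Z}_2$ to exactly two infinite branches via the factorization $\nu_2(f(n))=\nu_2(a)+\nu_2(n-\xi_1)+\nu_2(n-\xi_2)$, supplies detail the paper simply asserts, but it is the same argument in substance.
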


\begin{proof}
Let $a$ be odd and $b=2k$ for some $k\in\mathbb{Z}$. Fix $\ell\in\mathbb{Z}^+$. Then $an^{2}+bn+c=0$ has roots of the form $x=\frac{-k\pm\sqrt{k^{2}-ac}}{a}$. By the hypothesis $4k^{2}-ac=2^{2\ell}\Delta$ where $\Delta\equiv\modd{1} {8}$. 

If $\Delta<0$ then we can naturally write $\Delta=1-8j$ where $j\in\left\{1,2,3,\ldots\right\}$.

If $\Delta>0$, then we can write $\Delta=1+8j=1-8(-j)$ where $j\in\mathbb{N}$.

Thus in either case $\Delta=1-8j$ where $j\in\mathbb{Z}$. Then 
it follows that $\sqrt{4k^{2}-4ac}=2^{\ell}\sqrt{1-8j}$. By Lemma~\ref{roots}, $\sqrt{1-8j}$ is in $\mathbb{Z}_{2}$. Furthermore, since the denominator of $x$ is odd this also guarantees that $x\in\mathbb{Z}_{2}$. Therefore, there are two infinite branches, one corresponding to each root. 
\end{proof}

\begin{corollary}\label{InfSpecCor}
Under the conditions of Proposition~\ref{InfSpecThm}, if $b^{2}-4ac=0$, the tree has one infinite branch.
\end{corollary}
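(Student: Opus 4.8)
The plan is to exploit the fact that a vanishing discriminant forces $f$ to be a perfect square up to the unit $a$. First I would write $b=2k$ and use $b^{2}-4ac=0$, i.e.\ $k^{2}=ac$, to factor
\[
f(n)=an^{2}+2kn+c=\frac{(an+k)^{2}}{a}.
\]
Since $a$ is odd we have $\nu_{2}(a)=0$, so Lemma~\ref{SuppLemm1} yields $\nu_{2}(f(n))=2\nu_{2}(an+k)$ for every $n$. Thus the entire valuation sequence is controlled by the single linear form $an+k$. The double root of $f$ is $x_{0}=-k/a$, and because $a$ is a unit in $\mathbb{Z}_{2}$ we have $x_{0}=-ka^{-1}\in\mathbb{Z}_{2}$; hence $f$ has a zero in $\mathbb{Z}_{2}$ and the tree must be infinite.

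To count the infinite branches I would determine exactly which nodes are non-terminating. For a node at level $i$ indexed by $r_{i-1}\in\{0,1,\ldots,2^{i}-1\}$, I evaluate
\[
\nu_{2}\bigl(f(2^{i}q+r_{i-1})\bigr)=2\,\nu_{2}\bigl(2^{i}aq+(ar_{i-1}+k)\bigr).
\]
If $\nu_{2}(ar_{i-1}+k)<i$, the valuation equals $2\nu_{2}(ar_{i-1}+k)$ independently of $q$, so the node terminates. If instead $\nu_{2}(ar_{i-1}+k)\geq i$, write $ar_{i-1}+k=2^{i}s$; then the valuation equals $2\bigl(i+\nu_{2}(aq+s)\bigr)$, which is non-constant as $q$ ranges over $\mathbb{N}$, so the node is non-terminating. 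Hence a level-$i$ node is non-terminating precisely when $ar_{i-1}+k\equiv\modd{0}{2^{i}}$.

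The decisive step is uniqueness. Since $a$ is odd, it is invertible modulo $2^{i}$, so the congruence $ar_{i-1}+k\equiv\modd{0}{2^{i}}$ has the single solution $r_{i-1}\equiv\modd{-ka^{-1}}{2^{i}}$, which is exactly the truncation of $x_{0}$. Therefore at every level there is precisely one non-terminating node, and it splits into one terminating child and one non-terminating child; the tree is infinite and has exactly one infinite branch, namely the branch converging to the double root $x_{0}$. This is the asserted conclusion.

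I expect the only real subtlety, rather than a genuine obstacle, to be that here $f$ is reducible (a perfect square times the unit $a$), so Lemma~\ref{SuppLemm6} does not apply directly; the branch count cannot be read off from periodicity and must instead be established by the explicit residue analysis above. Getting the dichotomy $\nu_{2}(ar_{i-1}+k)<i$ versus $\geq i$ correct, together with the unit-inverse argument securing uniqueness of the non-terminating residue at each level, is the heart of the argument.
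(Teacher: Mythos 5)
Your proof is correct, and it is substantially more detailed than the paper's, which disposes of this case in two sentences: the double root is $x=-b/(2a)=-k/a$, and since $a$ is odd this lies in $\mathbb{Z}_{2}$, ``which guarantees that the one branch is infinite.'' The core observation --- that the vanishing discriminant produces a single root $-ka^{-1}\in\mathbb{Z}_{2}$ --- is the same in both arguments. What you add is the explicit reduction $\nu_{2}(f(n))=2\nu_{2}(an+k)$ via $af(n)=(an+k)^{2}$, followed by the residue analysis showing that at each level the congruence $ar_{i-1}+k\equiv\modd{0}{2^{i}}$ has exactly one solution, so exactly one node per level is non-terminating. This is worth having: the paper's appeal to the root-in-$\mathbb{Z}_{2}$ criterion leans on Lemma~\ref{SuppLemm6}, which is stated for irreducible polynomials, and here $f$ is a unit times a perfect square, so the lemma does not apply verbatim (the paper's general remark about splitting reducible quadratics into linear factors via Lemma~\ref{SuppLemm1} is the intended patch, and your identity $\nu_{2}(f(n))=2\nu_{2}(an+k)$ is exactly that patch made explicit). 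Your uniqueness argument also explains \emph{why} the count is one rather than two, which in the paper is only implicit in the fact that the two roots of Proposition~\ref{InfSpecThm} have collapsed into a single repeated root. In short: same idea, but your version closes a small logical gap and supplies the branch count the paper asserts without proof.
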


\begin{proof}
In this case (3(a) of Theorem~\ref{MainThm1}) roots take the form $x=-\frac{b}{2a}$. Since $b=2k$, then $x=-\frac{k}{a}$ which has 2-adic form $x=\sum_{i=0}^{\infty}\alpha_{i}2^{i}$ where $\alpha_{i}$ is either 0 or 1. This guarantees that the one branch is infinite.
\end{proof}

\begin{remark}
Note the connection between subsequences of $(\nu_2(f(n)))_{n\geq 0}$ and the infinite branches of a tree.
Proposition~\ref{InfCase1} asserts that for all $i\in\mathbb{Z}^{+}$ there exists exactly one subsequence  of the form $n=2^{i}q+r_{i-1}$ such that $\nu_{2}(f(n))\geq i$ and exactly one subsequence of the form
$n=2^{i}q+r_{i-1}^{*}$ with $\nu_{2}(f(n))=i-1$. Similarly, Proposition~\ref{InfCase2} asserts that for all $i\in\mathbb{Z}^{+}$ there are exactly two subsequences corresponding to $n=2^{i}q+r_{i-1}$ such that $\nu_{2}(f(n))\geq i+1$ and exactly two subsequences of the form $n=2^{i}q+r_{i-1}^{*}$ with $\nu_{2}(f(n))=i$. For $r_{i-1}$ and $r_{i-1}^{*}$, the representations presented in equation (\ref{2adicRemainder}) of Section~\ref{SecParityandTrees} equate the coefficients $\alpha_{k}$ and $\alpha_{k}^{*}$ for all $0\leq k\leq i-2$, and
meanwhile $\alpha_{i-1}^{*} \equiv \modd{\alpha_{i-1}+1} {2}$.

As for the cases of Proposition~\ref{InfSpecThm} and Corollary~\ref{InfSpecCor}, we can apply Lemma~\ref{SuppLemm6} to conclude that these sequences are unbounded. Much like Propositions~\ref{InfCase1} and~\ref{InfCase2}, we can say that the results of Proposition~\ref{InfSpecThm} yield that for all $i\in\mathbb{N}$ there are exactly two subsequences of the form $n=2^{i}q+r_{i-1}$, where $(\nu_{2}(f(n)))_{n\geq 0}$ is not constant, while Corollary~\ref{InfSpecCor} asserts there is exactly one such subsequence.
\end{remark}

\section{Bounded cases and finite trees}\label{BoundedSection}

In this section, we prove Case 3(c) of Theorem~\ref{MainThm1} and the first part of Theorem~\ref{MainThm2}. The coefficients of these quadratics satisfy the following: $a$ is odd and $b$ is even, and $b^{2}-4ac=4^{\ell}\Delta$, where $\ell\in\mathbb{Z}^+$ is as large as possible, $\Delta\equiv\modd{m} {8}$, and $m\in\{2,3,5,6,7\}$. Their trees are finite with $\ell$ levels. We can again apply the reasoning of the proof of Proposition \ref{InfSpecThm}. 

If $\Delta<0$ we can naturally write $\Delta=m-8j$ where $j\in\mathbb{N}$ and if $\Delta>0$ then we write $\Delta=m+8j=m-8(-j)$ where $j\in\mathbb{N}$ or $j=0$. Henceforth, we will write $\Delta=m-8j$ where $j\in\mathbb{Z}$. Again, by Lemma~\ref{roots}  functions of the form $g(x)=x^{2}-(m-8j)$ do not have a zero in $\mathbb{Z}_{2}$. By Lemma~\ref{SuppLemm6}, the corresponding valuation sequences are periodic. Figures~\ref{fig:Ex5} and~\ref{fig:Ex6} in the Appendix illustrate examples of finite trees arising from functions $f_3(n)=15n^2+1142n+25559$
and $f_4(n)=5n^2+106n+1125$. 

We should take a moment to note why we only need to consider these five values of $m$. First note that in Cases 3(b) and 3(c) of Theorem~\ref{MainThm1}, where $a$ is odd and $b$ is even, we have the condition that $\ell$ is as large as possible. This corresponds to factoring out as many powers of 4 as possible, ruling out the possibilities $m\in\{0,4\}$. Now if $m=1$ (Case 3(b), covered in 
Section~\ref{SecInf}), an infinite tree is created. This leaves the cases $m\in\{2,3,5,6,7\}$. As discussed above, the zeros of these quadratic functions are not elements of $\mathbb{Q}_{2}$; therefore, their trees must be finite. The proofs of the next two propositions follow the proofs of Propositions~\ref{InfCase1} and~\ref{InfCase2}.

\begin{proposition}\label{FinThm}
If $a$ is odd and $b$ is even, and $b^{2}-4ac=4^{\ell}\Delta$ where $\ell\in\mathbb{Z}^+$ is as large as possible, $\Delta\equiv\modd{m} {8}$, and $m\in\{2,3,5,6,7\}$, then the 2-adic valuation tree of $f(n)$ is finite with $\ell$ levels.
\end{proposition}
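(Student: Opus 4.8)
The plan is to mimic the inductive tree-building argument of Propositions~\ref{InfCase1} and~\ref{InfCase2}, but now track the valuation mod $2^{i+1}$ carefully enough to see that the splitting eventually \emph{stops}, and that it stops precisely at level $\ell$. First I would set up the algebra: write $b=2k$ and use that $a$ is odd, so that $a$ is invertible mod every power of $2$. Completing the square gives $4a f(n) = (2an+b)^2 - (b^2-4ac) = (2an+b)^2 - 4^\ell \Delta$. Since $\nu_2(4a)=2$ and $a$ is a unit, controlling $\nu_2(f(n))$ is equivalent to controlling $\nu_2\big((2an+b)^2 - 4^\ell\Delta\big) - 2$. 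Substituting $b=2k$ and factoring out the common power of $2$, the quantity of interest becomes $\nu_2\big((an+k)^2 - 4^{\ell-1}\Delta\big)$, i.e.\ the comparison of the square $(an+k)^2$ against $4^{\ell-1}\Delta$ in $\mathbb{Z}_2$.

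The key steps I would carry out in order. (1) Show that as $n$ ranges over a residue class, $m_0 := an+k$ ranges over a full residue class mod $2^i$ (because $a$ is a unit), so the tree on $n$ is the same as the tree on the value $m_0$; this lets me reduce to understanding $\nu_2(m_0^2 - 4^{\ell-1}\Delta)$. (2) Establish by induction on the level $i$ that there is exactly one ``live'' (non-terminating) residue class at each level $i < \ell$, characterized by $m_0 \equiv 0 \pmod{2^{i}}$ forced by matching $m_0^2$ against the $4^{\ell-1}\Delta = 2^{2\ell-2}\Delta$ term; at each such level the two children are distinguished because the next binary digit of $m_0$ changes $\nu_2(m_0^2 - 4^{\ell-1}\Delta)$ by a controlled amount, so one child terminates and one survives. (3) Argue that the surviving node reaches level $\ell$ with $m_0 \equiv 0 \pmod{2^{\ell-1}}$, so $m_0^2 \equiv 0 \pmod{2^{2\ell-2}}$, making $m_0^2$ and $2^{2\ell-2}\Delta$ agree in valuation; here $\Delta$ being odd ($m\in\{2,3,5,6,7\}$ all give $\Delta$ odd after the maximal power of $4$ is removed, with $\Delta \equiv m \pmod 8$) is what forces the valuation to become \emph{constant} on both children at level $\ell$, since $m_0^2/2^{2\ell-2}$ is a square mod $8$ and $\Delta \equiv m \not\equiv 1 \pmod 8$ cannot be matched by Lemma~\ref{roots}. (4) Conclude that every node at level $\ell$ is terminating, so the tree is finite with exactly $\ell$ levels, invoking Lemma~\ref{SuppLemm6} for periodicity and the tree/period correspondence from Section~\ref{SecParityandTrees}.

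The main obstacle I expect is step (3): pinning down that the tree terminates at level $\ell$ rather than earlier or later. The ``not earlier'' direction requires showing the live branch genuinely survives through levels $1,\dots,\ell-1$, i.e.\ that $(an+k)^2$ can be made to match $2^{2\ell-2}\Delta$ to successively higher $2$-adic precision along the branch; this is where the maximality of $\ell$ (no further power of $4$ divides the discriminant) does the work, guaranteeing $\Delta$ is odd so the matching is possible up to precision $2^{2\ell-2}$ but no further. The ``not later'' direction is exactly where $m\not\equiv 1 \pmod 8$ enters: once we reach the scale $2^{2\ell-2}$, comparing the residue of $m_0^2/2^{2\ell-2}$ (a unit square, hence $\equiv 1 \pmod 8$) against $\Delta \equiv m \pmod 8$ shows they differ, so $\nu_2(f(n))$ locks to a constant and cannot split again—this is precisely the obstruction Lemma~\ref{roots} encodes. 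I would handle the five values of $m$ uniformly by noting only the residue $\Delta \bmod 8$ matters for termination, deferring the finer computation of the \emph{exact} terminal valuations (which depend on $m$) to the later result stated as Theorem~\ref{MainThm2}.
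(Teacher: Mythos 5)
Your overall route is the same as the paper's: complete the square so that $\nu_{2}(f(n))=\nu_{2}\bigl((an+k)^{2}-4^{\ell-1}\Delta\bigr)$ with $b=2k$, use that $a$ is a unit so the tree in $n$ matches the tree in $m_{0}=an+k$, show by induction that the unique live node at level $i<\ell$ is $m_{0}\equiv 0\pmod{2^{i}}$ with the terminating sibling locked at valuation $2(i-1)$, and then kill both children at level $\ell$ using $\Delta\bmod 8$. This is precisely the decomposition the paper carries out in Lemmas~\ref{FinLem1},~\ref{FinLem2}, and~\ref{FinLem3}.

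However, step (3) contains a concrete error that leaves a gap. You assert that all of $m\in\{2,3,5,6,7\}$ give $\Delta$ odd; this is false for $m=2$ and $m=6$, where $\Delta\equiv 2,6\pmod 8$ is exactly divisible by $2$ (maximality of $\ell$ only rules out $4\mid\Delta$, i.e., $m\in\{0,4\}$; see the paper's example $f_3$, where $\Delta=-14$). The error is load-bearing: at level $\ell$ the surviving node from level $\ell-1$ has two children, one with $\nu_{2}(an+k)=\ell-1$ exactly and one with $an+k\equiv 0\pmod{2^{\ell}}$. Your ``unit square $\equiv 1$ versus $m\pmod 8$'' comparison (the content of Lemma~\ref{roots}) settles only the first child. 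On the second child $(an+k)^{2}/2^{2(\ell-1)}$ is divisible by $4$, hence not a unit square, so that comparison does not apply, and termination there rests on your claim that $\Delta$ is odd, which fails for $m=2,6$. The repair is easy: since $4\nmid\Delta$, on that child $\nu_{2}\bigl((an+k)^{2}/2^{2(\ell-1)}-\Delta\bigr)=\nu_{2}(\Delta)\in\{0,1\}$ is constant, so the node terminates with valuation $2(\ell-1)$ when $m$ is odd and $2\ell-1$ when $m=2,6$. This is exactly the case split the paper makes explicit in Lemma~\ref{FinLem3}, and it shows that the two level-$\ell$ children terminate for two different reasons rather than the single reason your outline gives. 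With that correction your argument matches the paper's proof.
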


The proof of this proposition is broken down into Lemmas~\ref{FinLem1},~\ref{FinLem2}, and~\ref{FinLem3}. Unless stated otherwise, let $b=2k$ for some $k\in\mathbb{Z}$. Lemma~\ref{FinLem1} covers the case $\ell=1$, in which the 2-adic valuation tree has exactly one level. Lemmas~\ref{FinLem2} and~\ref{FinLem3} describe valuations for finite trees with more than one level; Lemma~\ref{FinLem3} describes the valuation at the final level and Lemma~\ref{FinLem2} describes the other levels. Under the assumptions of Proposition~\ref{FinThm}, with $a$ odd and $b$ even, we complete the square and use properties of the $p$-adic valuation to obtain $\nu_{2}(an^{2}+bn+c)=\nu_{2}((an+k)^{2}-k^{2}+ac)$.
\begin{lemma}\label{FinLem1}
Let $\ell=1$, i.e., $b^{2}-4ac=4\Delta$, $\Delta\equiv\modd{m} {8}$, and $m\in\{2,3,5,6,7\}$. If $m\in\{2,7\}$ and $b\equiv\modd{0} {4}$ or if $m\in\{3,6\}$ and $b\equiv\modd{2} {4}$, then
\begin{displaymath}
\nu_{2}(an^{2}+bn+c)=\begin{cases}
0,&\ \text{if}\ n\ \text{even};\\
1,&\ \text{if}\ n\ \text{odd}.
\end{cases}
\end{displaymath}
If $m\in\left\{3,6\right\}$ and $b\equiv\modd{0} {4}$ or if $m\in\left\{2,7\right\}$ and $b\equiv\modd{2} {4}$, then
\begin{displaymath}
\nu_{2}(an^{2}+bn+c)=\begin{cases}
1,&\ \text{if}\ n\ \text{even};\\
0,&\ \text{if}\ n\ \text{odd}.
\end{cases}
\end{displaymath}
If $m=5$ and $b\equiv\modd{0} {4}$, then
\begin{displaymath}
\nu_{2}(an^{2}+bn+c)=\begin{cases}
0,&\ \text{if}\ n\ \text{even};\\
2,&\ \text{if}\ n\ \text{odd}.
\end{cases}
\end{displaymath}
If $m=5$ and $b\equiv\modd{2} {4}$, then
\begin{displaymath}
\nu_{2}(an^{2}+bn+c)=\begin{cases}
2,&\ \text{if}\ n\ \text{even};\\
0,&\ \text{if}\ n\ \text{odd}.
\end{cases}
\end{displaymath}
\end{lemma}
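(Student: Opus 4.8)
The plan is to collapse the whole computation to a single shifted square and then read everything off modulo $8$. Since $a$ is odd and $b=2k$, completing the square gives $a\,f(n)=(an+k)^2-k^2+ac$, and because $\nu_2(a)=0$ we get $\nu_2(f(n))=\nu_2((an+k)^2-(k^2-ac))$. In the case $\ell=1$ the hypothesis $b^2-4ac=4\Delta$ forces $k^2-ac=\Delta$, so the problem reduces entirely to computing $\nu_2((an+k)^2-\Delta)$ where $\Delta\equiv m\pmod 8$ with $m\in\{2,3,5,6,7\}$. This reduction is exactly the identity flagged just before the statement, so no new work is needed to set it up.

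First I would record the parity of $an+k$. As $a$ is odd we have $an+k\equiv n+k\pmod 2$, so $an+k$ is even precisely when $n$ and $k=b/2$ share the same parity. Translating the condition on $b$: when $b\equiv 0\pmod4$ the quantity $k$ is even and the parity of $an+k$ equals that of $n$, whereas when $b\equiv 2\pmod4$ the quantity $k$ is odd and the parity of $an+k$ is the opposite of that of $n$. This dictionary between the parity of $n$ and the parity of $an+k$ is what turns the intrinsic computation below into a table indexed by $m$ and $b\bmod 4$.

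The core is then a two-line case split on the parity of $an+k$. If $an+k$ is odd, then $(an+k)^2\equiv 1\pmod 8$, so $(an+k)^2-\Delta\equiv 1-m\pmod 8$; a one-line check of $\nu_2(1-m)$ for $m=2,3,5,6,7$ yields the valuations $0,1,2,0,1$. If $an+k$ is even, then $(an+k)^2\equiv 0\pmod 4$, so $(an+k)^2-\Delta\equiv -m\pmod 4$, yielding $1,0,0,1,0$. The only entry that genuinely requires the modulus $8$ rather than $4$ is $m=5$ on the odd branch, where one must see $(an+k)^2-\Delta\equiv 4\pmod 8$ to certify that the valuation is exactly $2$ and not larger; all remaining entries are settled modulo $4$.

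Finally I would assemble the table by feeding the parity dictionary of the second paragraph into the two valuation lists of the third. The main obstacle here is not any estimate but the bookkeeping: one must correctly correlate ``$n$ even/odd'' with ``$an+k$ even/odd'' through the value of $b\bmod 4$, and then pair each case with the correct entry of the mod-$8$ list. This correspondence is exactly where a sign slip or an off-by-one in the residue pairing can creep in, so I would verify every row against a small concrete instance, for example $f(n)=n^2-m$ with $m\in\{2,3,5,6,7\}$ (here $b\equiv0\pmod4$ and $\Delta=m$), which pins down each line of the case analysis independently.
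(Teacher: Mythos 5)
Your reduction and both case computations are correct, and they follow exactly the paper's own route: complete the square to get $\nu_2(f(n))=\nu_2\bigl((an+k)^2-\Delta\bigr)$, split on the parity of $an+k$, and read the valuation off modulo $8$ on the odd branch and modulo $4$ on the even branch. The two lists you obtain (valuations $0,1,2,0,1$ for $m=2,3,5,6,7$ when $an+k$ is odd, and $1,0,0,1,0$ when $an+k$ is even) are right, as is the dictionary between the parity of $n$ and that of $an+k$ governed by $b\bmod 4$. The paper does the same thing, working only $m=7$ and $m=5$ explicitly and asserting that $m\in\{2,3,6\}$ are analogous.

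The genuine problem is the step you defer as ``bookkeeping'': carrying it out does \emph{not} reproduce the table in the statement. Take $m=2$ with $b\equiv 0\pmod 4$, which the statement places in the first display (valuation $0$ for $n$ even). Then $k$ is even, so for $n$ even $an+k$ is even and your even-branch list gives valuation $1$, not $0$; your own suggested test case $f(n)=n^2-2$ confirms this, since $\nu_2(f(0))=\nu_2(-2)=1$. Likewise $f(n)=n^2+5$ (here $\Delta=-5\equiv 3\pmod 8$ and $b\equiv 0\pmod 4$) has $\nu_2(f(0))=0$, contradicting the second display. What your computation actually proves is the statement with the groupings $\{2,7\}$ and $\{3,6\}$ replaced by $\{3,7\}$ and $\{2,6\}$, i.e., the split is by the parity of $m$ — which is consistent with the $m=7,3$ versus $m=6,2$ grouping used in Lemma~\ref{FinLem3} and Theorem~\ref{MainThm2}, so the printed statement evidently transposes $2$ and $3$. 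As a proof of the statement as written, your proposal therefore cannot close at the assembly step; you need to either carry out the assembly explicitly and state the corrected groupings, or flag the discrepancy. Leaving the final pairing implicit is precisely what conceals the error.
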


\begin{proof}
Using the convention that $\Delta=m-8j$ where $j\in\mathbb{Z}$ and $m\in\left\{2,3,5,6,7\right\}$, consider the case where $m=7$ and $b\equiv\modd{2} {4}$. Then, since $b=2k$, we have $k$ odd. If $n$ is even, then $(an+k)^2\equiv\modd{ k^{2}} {2}$ and so it follows that 
$$((an+k)^{2}-k^{2}+ac) \equiv k^2-7 \equiv -6 \equiv \modd{0} {2},$$
but
$$((an+k)^{2}-k^{2}+ac)\equiv k^2-7 \equiv -6 \equiv \modd{2} {4}.$$ 
Therefore $\nu_{2}(an^{2}+bn+c)=1$ when $n$ is even. Similarly, when $m=7$ and $b\equiv\modd{2} {4}$ if $n$ is odd, then $(an+k)^{2}$ is even. 
Therefore, 
$(an+k)^{2}-k^{2}+ac \equiv -7 \equiv \modd{1} {2}$.
Thus $\nu_{2}(an^{2}+bn+c)=0$ when $n$ is odd.

Now consider the case where $m=7$ and $b\equiv\modd{0}
{4}$. We have $b=2k$ with $k$ even. Thus, if $n$ is odd we have
$$(an+k)^{2}-k^{2}+ac \equiv -6\equiv \modd{0} {2} $$
and
$$(an+k)^{2}-k^{2}+ac \equiv k^2-7 \equiv -6 \equiv \modd{2} {4}.$$ 
Thus $\nu_{2}(an^{2}+bn+c)=1$ when
$n$ is odd. When $n$ is even we have $(an+k)^{2}-k^{2}+ac
\equiv -7 \equiv \modd{1} {2}$. Thus $\nu_{2}(an^{2}+bn+c)=0$ when $n$
is even.

The cases of $m\in\{2,3,6\}$ when $b\equiv\modd{0} {4}$ or $b\equiv\modd{2} {4}$ can be handled in the same fashion. For $m=5$, the valuations are slightly different.

Consider the case where $m=5$. Recall that $b=2k$ for some $k\in\mathbb{Z}$. Note that
$$
  b^2-4ac=4(5-8j),
$$
and hence $k^2-ac=5-8j$. Thus,
$$
  (an+k)^2-k^2+ac=(an+k)^2-5+8j.
$$
If $(an+k)$ is even, which is the case when both $n$ and $k$ are even or both $n$ and $k$ are odd, then $(an+k)^2-5+8j$ is odd.
Thus, $\nu_{2}(an^{2}+bn+c)=0$.

Now suppose that $(an+k)$ is odd, which is true when $n$ and $k$ have different parity. Then $(an+k)^2\equiv\modd{1} {4}$, and this implies
$$
  (an+k)^2-5+8j \equiv 1-5+8j \equiv -4+8j \equiv\modd{0} {4}.
$$
Thus, $\nu_{2}(an^{2}+bn+c)\geq 2$. \\
Since $(an+k)$ is odd, let $an+k=2d+1$, for some $d\in \mathbb{Z}$. Then,
\begin{align*}
 (an+k)^2-5+8j &= (2d+1)^2-5+8j\\
        &\equiv\modd{4(d^2+d-1)} {8}.
\end{align*}
Observe that $d^2+d-1$ is odd, regardless of whether $d$ is even or odd. Thus, $\nu_{2}(an^{2}+bn+c)<3$. Therefore,
$\nu_{2}(an^{2}+bn+c)=2$.
\end{proof}

\begin{lemma}\label{FinLem2}
Under the assumptions of Proposition~\ref{FinThm} (Case 3(c) of Theorem~\ref{MainThm1}) let $\ell\geq 2$ and suppose $0<i<\ell$.
At the $i^{th}$ level there is one terminal and one non-terminal node. Furthermore, the terminal node has valuation $2(i-1)$ and the non-terminal node has valuation at least $2i$.
\end{lemma}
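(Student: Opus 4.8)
The plan is to work entirely with the completed-square form recorded just before Lemma~\ref{FinLem1}. Since $a$ is odd and $b=2k$, we have $\nu_2(f(n))=\nu_2\bigl((an+k)^2-(k^2-ac)\bigr)$, and the hypothesis $b^2-4ac=4^\ell\Delta$ gives $k^2-ac=2^{2\ell-2}\Delta$. I abbreviate $W:=k^2-ac$ and $u:=an+k$, so the quantity to analyze is $\nu_2(u^2-W)$, where $\nu_2(W)=2\ell-2+\nu_2(\Delta)$ with $\nu_2(\Delta)=0$ for $m\in\{3,5,7\}$ and $\nu_2(\Delta)=1$ for $m\in\{2,6\}$. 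The structural fact I use repeatedly is that $a$ is a unit modulo every power of $2$, so $n\mapsto u$ is a bijection on residue classes modulo $2^i$; hence each level-$i$ node is a class $u\equiv u_0\pmod{2^i}$, and the terminal node the lemma produces will be $u\equiv 2^{i-1}\pmod{2^i}$, i.e. $n\equiv a^{-1}(2^{i-1}-k)\pmod{2^i}$, matching the residues in Theorem~\ref{MainThm2}.

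The engine of the proof is the ultrametric identity: if $\nu_2(u)=s$ then $\nu_2(u^2)=2s$, and whenever $2s\neq\nu_2(W)$ one has $\nu_2(u^2-W)=\min(2s,\nu_2(W))$. I would then run an induction on the levels, in the spirit of Proposition~\ref{InfCase1}. For the base case $i=1$: if $u$ is odd then $u^2-W$ is odd (as $W$ is even when $\ell\ge2$), giving the terminal node with valuation $0=2(1-1)$; if $u$ is even then $\nu_2(u^2-W)\ge2$, giving the non-terminal node. For the inductive step I assume the level-$i$ non-terminal node is the class $\nu_2(u)\ge i$; writing $u=2^iw$ and using $2i\le 2\ell-2\le\nu_2(W)$ shows $\nu_2(f(n))\ge 2i$ there, which is the asserted lower bound. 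Its two children at level $i+1$ are $\nu_2(u)\ge i+1$ and $\nu_2(u)=i$. On the child $\nu_2(u)=i$ we have $\nu_2(u^2)=2i$, and because this child sits at level $i+1\le\ell-1$ we have $i\le\ell-2$, so $2i\le 2\ell-4<\nu_2(W)$ strictly; hence $\nu_2(u^2-W)=2i=2\bigl((i+1)-1\bigr)$ is constant and this child terminates with the claimed valuation. The child $\nu_2(u)\ge i+1$ has $\nu_2(u^2-W)\ge 2(i+1)$ and carries the non-terminal behavior forward. To confirm each non-terminal node is genuinely non-terminal (so the tree does not stop early), I would exhibit two members with distinct valuations — one with $\nu_2(u)=i$ and one with $\nu_2(u)$ large (valuation $\nu_2(W)$) — which is immediate from the ultrametric identity when $2i<\nu_2(W)$ and follows from $v^2\equiv1\pmod 8$ in the single borderline subcase $i=\ell-1$, $m\in\{3,5,7\}$.

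The one subtlety to keep track of is the tie case $2\nu_2(u)=\nu_2(W)$, where the leading terms of $u^2$ and $W$ cancel and the ultrametric minimum fails to pin down the valuation. The key observation is that within the range of this lemma the tie never occurs at a terminal node: every terminal node here has $2\nu_2(u)=2(i-1)\le 2\ell-4<\nu_2(W)$, so its valuation is determined cleanly by the cheaper term $u^2$. The tie $2\nu_2(u)=\nu_2(W)$ can only arise at $\nu_2(u)=\ell-1$ with $m\in\{3,5,7\}$, which lives at the final level $\ell$ and is therefore deferred to Lemma~\ref{FinLem3}. Thus the bulk of the work is to verify the base case and the two inductive inequalities and to confirm this clean separation of levels; the delicate leading-term cancellation is precisely what is left for the final level.
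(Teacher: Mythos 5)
Your proof is correct and follows essentially the same route as the paper's: complete the square, induct on the level, and split the non-terminating node $\nu_2(an+k)\geq i$ into the child with $\nu_2(an+k)=i$ (terminal, valuation $2i$, since $2i<\nu_2(k^2-ac)$ strictly below the top level) and the child with $\nu_2(an+k)\geq i+1$ (valuation at least $2(i+1)$). Your substitution $u=an+k$ together with the ultrametric inequality is just a cleaner bookkeeping of the paper's explicit expansion with $ar_{i-1}+k=2^i\beta$ and the parity of $2aq+\beta$ versus $2aq+a+\beta$, and your explicit treatment of the borderline tie at $i=\ell-1$ (deferred to the final level) matches the paper's deferral to Lemma~\ref{FinLem3}.
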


First we need:
\begin{claim}\label{SuppLemm7}
Let $a,k\in\mathbb{Z}$ with $a$ odd. Let $g(n)=an+k$, then $(\nu_{2}(g(n)))_{n\geq0}$ creates an unbounded sequence.
\end{claim}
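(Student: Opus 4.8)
The plan is to prove unboundedness directly from the definition, without invoking Lemma~\ref{SuppLemm6}. Unboundedness of $(\nu_{2}(g(n)))_{n\geq 0}$ means precisely that for every $N\in\mathbb{Z}^{+}$ there is some $n\in\mathbb{N}$ with $\nu_{2}(g(n))\geq N$, which is equivalent to the solvability of the congruence $an+k\equiv 0\pmod{2^{N}}$ in $\mathbb{N}$ for every $N$. So I would fix an arbitrary $N\in\mathbb{Z}^{+}$ and exhibit such an $n$.

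The key observation is that, since $a$ is odd, $\gcd(a,2^{N})=1$, so $a$ is invertible modulo $2^{N}$. Hence the linear congruence $an\equiv -k\pmod{2^{N}}$ has a unique solution modulo $2^{N}$; choosing the representative $n_{N}\in\{0,1,\ldots,2^{N}-1\}$ produces a natural number with $2^{N}\mid an_{N}+k$, that is, $\nu_{2}(g(n_{N}))\geq N$. Since $N$ was arbitrary, the valuations are not bounded above, and the sequence is unbounded.

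There is no serious obstacle here; the only points deserving a word of care are that the solution can be taken in $\mathbb{N}$ rather than merely in $\mathbb{Z}$ (handled by reducing modulo $2^{N}$ to the stated representative), and the degenerate possibility that $g(n_{N})=0$ for some $n_{N}$, which can occur only when $a\mid k$. In that case $\nu_{2}(g(n_{N}))=+\infty$ and unboundedness is immediate regardless.

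As an alternative confirmation consistent with the 2-adic framework already established, since $a$ is a unit in $\mathbb{Z}_{2}$ the element $-k/a$ lies in $\mathbb{Z}_{2}$ and is a zero of $g$; after dividing out the odd content $\gcd(a,k)$ (which, being odd, leaves every valuation unchanged) one obtains a primitive, hence irreducible, linear polynomial to which Lemma~\ref{SuppLemm6} applies, forcing the sequence to be non-periodic and therefore unbounded. I would present the elementary congruence argument as the main proof, since it is self-contained and yields the explicit witnesses $n_{N}$.
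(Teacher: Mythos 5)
Your proof is correct, but it takes a genuinely different route from the paper's. The paper argues via Lemma~\ref{SuppLemm6}: it observes that the root $x=-k/a$ satisfies $\nu_{2}(x)=\nu_{2}(-k)-\nu_{2}(a)=\nu_{2}(-k)\geq 0$, hence lies in $\mathbb{Z}_{2}$, and then invokes the periodic-or-unbounded dichotomy to conclude unboundedness. Your main argument is instead elementary and self-contained: invertibility of $a$ modulo $2^{N}$ yields, for every $N$, an explicit $n_{N}\in\{0,1,\ldots,2^{N}-1\}$ with $\nu_{2}(g(n_{N}))\geq N$, which is exactly the definition of unboundedness. What your approach buys is independence from the Medina--Moll--Rowland machinery, explicit witnesses along which the valuation grows, and a clean treatment of the degenerate case $g(n_{N})=0$; it also sidesteps the irreducibility hypothesis of Lemma~\ref{SuppLemm6}, which the paper's one-line application glosses over (a linear polynomial with non-unit content is reducible over $\mathbb{Z}$, though, as you note, the content here is odd and so does not affect $2$-adic valuations). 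What the paper's approach buys is brevity and consistency with the root-in-$\mathbb{Z}_{2}$ viewpoint used throughout Section~\ref{SecInf}; your closing alternative argument is essentially the paper's proof, stated with slightly more care.
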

\begin{proof}
First note that the root of $ax+k=0$ is $x=-\frac{k}{a}$. Also note that $\nu_{2}(x)=\nu_{2}(-k)-\nu_{2}(a)$. Since $a$ is odd, $\nu_{2}(a)=0$. Therefore $\nu_{2}(x)=\nu_{2}(-k)\geq0$. By equation~\eqref{SuppLemma3.5}, $x\in\mathbb{Z}_{2}$, so Lemma~\ref{SuppLemm6} implies that $(\nu_{2}(g(n)))_{n\geq0}$ is an unbounded sequence.
\end{proof}

\begin{proof}
To prove Lemma~\ref{FinLem2}, we proceed by an inductive argument on $i$. Again, using the convention that $\Delta=m-8j$ where $j\in\mathbb{Z}$, for the base case $i=1$, note that
$b^{2}-4ac \equiv 4^{\ell}(m-8j) \equiv \modd{0} {4}$.
Recall that $b=2k$. First, assume that $k$ is even. If $n$ is even, then $an+k$ is even and so $(an+k)^{2}-k^{2}+ac \equiv \modd{0} {4}$. Thus $\nu_{2}(an^{2}+bn+c)=\nu_{2}((an+k)^{2}-k^{2}+ac)\geq2$ by Claim~\ref{SuppLemm7}. If $n$ is odd, then $(an+k)^{2}-k^{2}+ac \equiv \modd{1} {2}$,
and again using the technique of completing the square,
$\nu_{2}(an^{2}+bn+c)=0$. If $k$ is odd, a similar argument shows that $\nu_{2}(an^{2}+bn+c)\geq2$ when $n$ is odd. Observe also that Claim~\ref{SuppLemm7} can be used to show that $(\nu_2((an+k)^2))_{n\geq 0}$
forms an unbounded sequence therefore $\nu_{2}((an+k)^{2}-k^{2}+ac)\geq2$. Thus, the claim is true for $i=1$.

For the inductive step, notice that since $i<\ell$, 
it follows that $$b^{2}-4ac \equiv 4^{\ell}(m-8j) \equiv \modd{0} {2^{2i}}.$$
Suppose there exists an $i-1\geq 0$ such that $n=2^{i-1}q+r_{i-2}$ splits into two nodes: one node terminating with valuation $2(i-1)$ and the other node having valuation of at least $2i$.
We let $n=2^{i}q+r_{i-1}$ denote the non-terminating node,
where $r_{i-1}=\sum_{h=0}^{i-1}\alpha_{h}2^{h}$ with
$\alpha_{h}\in\left\{0,1\right\}$, for all $0\leq h\leq i-2$, and $q\in\mathbb{Z}$.
Then we have 
$$(an+k)^{2}-k^{2}+ac \equiv {(a(2^{i}q+r_{i-1})+k)^{2}} \equiv
\modd{0} {2^{2i}} ,$$
so $\nu_{2}(an^{2}+bn+c)\geq2i$. This also implies that 
$a(2^{i}q+r_{i-1})+k \equiv \modd {0} {2^{i}}$. Thus $ar_{i-1}+k=2^{i}\beta$ where $\beta\in\mathbb{Z}$. Now suppose that $k$ is even. (The proof for $k$ odd can be handled in the same fashion, and thus is omitted.) Since $k$ is even, then $r_{i-1}$ must be even.

Consider the $(i+1)^{st}$ level where $i+1<\ell$. Here again we
have 
$$b^{2}-4ac=\modd{4^{\ell}(m-8j)} {2^{2(i+1)}}\equiv0.$$
Moving to the next level, in the case $n=2^{i+1}q+r_{i-1}$ we have
\begin{align*}
    \nu_{2}((an+k)^{2}-4^{\ell-1}(m-8j))&=\nu_{2}((a(2^{i+1}q+r_{i-1})+k)^{2}-4^{\ell-1}(m-8j))\\
    &=\nu_{2}((2^{i+1}aq+ar_{i-1}+k)^{2}-4^{\ell-1}(m-8j))\\
    &=\nu_{2}((2^{i+1}aq+2^{i}\beta)^{2}-4^{\ell-1}(m-8j))\\
    &=\nu_{2}(2^{2i}(2aq+\beta)^{2}-2^{2(\ell-1)}(m-8j)),
\end{align*}
and in the case $n=2^{i+1}q+2^{i}+r_{i-1}$ we have
\begin{align*}
    &\nu_{2}((an+k)^{2}-4^{\ell-1}(m-8j))\\
		&\hspace{30pt}=\nu_{2}((a(2^{i+1}q+2^{i}+r_{i-1})+k)^{2}-4^{\ell-1}(m-8j))\\
    &\hspace{30pt}=\nu_{2}((2^{i+1}aq+2^{i}a+ar_{i-1}+k)^{2}-4^{\ell-1}(m-8j))\\
    &\hspace{30pt}=\nu_{2}((2^{i+1}aq+2^{i}a+2^{i}\beta)^{2}-4^{\ell-1}(m-8j))\\
    &\hspace{30pt}=\nu_{2}(2^{2i}(2aq+a+\beta)^{2}-2^{2(\ell-1)}(m-8j)).
\end{align*}
Since $\beta\in\mathbb{Z}$ either $2aq+\beta$ or $2aq+a+\beta$ is odd and the other is even. As long as $i+1<\ell$ then in the odd case the valuation is $2i$ and in the even case the valuation is at least $2(i+1)$.
\end{proof}

\begin{lemma}\label{FinLem3}
If $a$ is odd and $b$ is even with $b=2k$ for $k\in\mathbb{Z}$, and $b^{2}-4ac=4^{\ell}\Delta$ where $\ell\in\mathbb{Z}^+$ is as large as possible, $\Delta\equiv\modd{m} {8}$, and $m\in\{2,3,5,6,7\}$, then at the $\ell^{th}$ level the nodes of the 2-adic valuation tree terminate with valuations of $2(\ell-1)$, $2\ell-1$ or $2\ell$.

Suppose that $n=2^{\ell}q+r_{\ell-2}$.
If $an+k\equiv\modd{0} {2^{\ell}}$, then
\begin{displaymath}
    \nu_{2}(f(n))=\begin{cases}
         2(\ell-1),&\ \text{if}\ m=7,5,3;\\
         2\ell-1,&\ \text{if}\ m=6,2;
    \end{cases}
\end{displaymath}
and if $an+k\not\equiv\modd{0} {2^{\ell}}$, then
\begin{displaymath}
    \nu_{2}(f(n))=\begin{cases}
         2(\ell-1),&\ \text{if}\ m=6,2;\\
         2\ell-1,&\ \text{if}\ m=7,3;\\
         2\ell,&\ \text{if}\ m=5.
    \end{cases}
\end{displaymath}

Suppose that $n=2^{\ell}q+2^{\ell-1}+r_{\ell-2}$. If $an+k\equiv\modd{0} {2^{\ell}}$, then
\begin{displaymath}
    \nu_{2}(f(n))=\begin{cases}
         2(\ell-1),&\ \text{if}\ m=6,2;\\
         2\ell-1,&\ \text{if}\ m=7,3;\\
         2\ell,&\ \text{if}\ m=5;
    \end{cases}
\end{displaymath}
and if $an+k\not\equiv\modd{0} {2^{\ell}}$, then
\begin{displaymath}
    \nu_{2}(f(n))=\begin{cases}
         2(\ell-1),&\ \text{if}\ m=7,5,3;\\
         2\ell-1,&\ \text{if}\ m=6,2.
    \end{cases}
\end{displaymath}
\end{lemma}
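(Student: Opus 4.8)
The plan is to reduce everything to the valuation identity already established for this case, namely $\nu_{2}(f(n)) = \nu_{2}\big((an+k)^{2} - 4^{\ell-1}(m-8j)\big)$, which follows from completing the square ($af(n) = (an+k)^2 - k^2 + ac$, with $a$ odd) together with $k^{2} - ac = \tfrac{1}{4}(b^{2}-4ac) = 4^{\ell-1}(m-8j)$. By Lemma~\ref{FinLem2} the unique non-terminal node at level $\ell-1$ is $n = 2^{\ell-1}q + r_{\ell-2}$ with $a r_{\ell-2} + k = 2^{\ell-1}\beta$ for some $\beta \in \mathbb{Z}$. Its two children at level $\ell$ are $n = 2^{\ell}q + r_{\ell-2}$ and $n = 2^{\ell}q + 2^{\ell-1} + r_{\ell-2}$, and a direct substitution gives $an+k = 2^{\ell-1}(2aq+\beta)$ and $an+k = 2^{\ell-1}(2aq + a + \beta)$ respectively.

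Since $a$ is odd, the integers $2aq+\beta$ and $2aq+a+\beta$ differ by an odd number and so have opposite parity; thus exactly one child satisfies $an+k \equiv 0 \pmod{2^{\ell}}$ (equivalently $\nu_{2}(an+k) \geq \ell$) and the other satisfies $an+k \not\equiv 0 \pmod{2^{\ell}}$ (equivalently $\nu_{2}(an+k) = \ell-1$), with the assignment governed by the parity of $\beta$. This is precisely the dichotomy indexing the sub-cases of the lemma, so it suffices to compute $\nu_{2}\big((an+k)^{2} - 4^{\ell-1}(m-8j)\big)$ in the two regimes $\nu_{2}(an+k)\geq \ell$ and $\nu_{2}(an+k)=\ell-1$; assigning each node the list matching its regime then yields the tabulated valuations.

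In the regime $\nu_{2}(an+k) \geq \ell$, write $an+k = 2^{\ell}s$ and factor to get $(an+k)^{2} - 4^{\ell-1}(m-8j) = 2^{2\ell-2}\big(4s^{2} - m + 8j\big)$, whence $\nu_{2}(f(n)) = 2(\ell-1) + \nu_{2}(4s^{2} - m + 8j)$. Reducing the bracket modulo $4$ shows it is odd for $m \in \{3,5,7\}$ (valuation $2(\ell-1)$) and $\equiv 2 \pmod{4}$ for $m \in \{2,6\}$ (valuation $2\ell-1$). In the regime $\nu_{2}(an+k) = \ell-1$, write $an+k = 2^{\ell-1}t$ with $t$ odd, so $(an+k)^{2} - 4^{\ell-1}(m-8j) = 2^{2\ell-2}(t^{2} - m + 8j)$ and $\nu_{2}(f(n)) = 2(\ell-1) + \nu_{2}(t^{2} - m + 8j)$. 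Here $t^{2} \equiv 1 \pmod{8}$, so $t^{2} - m + 8j \equiv 1-m \pmod{8}$, and reading off $\nu_{2}(1-m)$ gives valuation $2(\ell-1)$ for $m\in\{2,6\}$, $2\ell-1$ for $m\in\{3,7\}$, and $2\ell$ for $m=5$.

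The one genuinely delicate point is $m=5$ in the second regime, where the sharper congruence $t^{2}\equiv 1 \pmod{8}$ (rather than merely mod $4$) is needed to detect the extra factor of $2$ that lifts the valuation to $2\ell$; this is where mod-$8$ rather than mod-$4$ bookkeeping is essential. The remaining task is to confirm that every level-$\ell$ node is genuinely terminal, i.e.\ that its valuation does not depend on $q$. This is immediate from the two factorizations: in the first regime $\nu_{2}(4s^{2} - m + 8j)$ depends only on $m$, and in the second $t^{2} \equiv 1 \pmod{8}$ holds for every odd $t$, so neither valuation varies with $q$. Hence all branches close at level $\ell$, confirming that the tree has exactly $\ell$ levels as asserted in Proposition~\ref{FinThm}, with every terminal valuation lying in $\{2(\ell-1),\,2\ell-1,\,2\ell\}$.
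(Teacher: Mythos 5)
Your proof is correct and takes essentially the same route as the paper: reduce to $\nu_{2}\bigl((an+k)^{2}-4^{\ell-1}(m-8j)\bigr)$ by completing the square, use Lemma~\ref{FinLem2} to write $an+k=2^{\ell-1}(2aq+\beta)$ and $an+k=2^{\ell-1}(2aq+a+\beta)$ at the two level-$\ell$ children, and split on the parity of $\beta$ --- you merely carry out explicitly the mod-$4$ and mod-$8$ computations that the paper dismisses as ``simple calculations,'' including the delicate $m=5$ case. One caveat worth noting: for the node $n=2^{\ell}q+2^{\ell-1}+r_{\ell-2}$ your regime assignment produces the two valuation lists in the opposite pairing from the lemma's literal wording, but your pairing is the one consistent with Corollary~\ref{FinCor} and Theorem~\ref{MainThm2}, so the congruence conditions in the lemma's second half must be read as referring to the sibling node $2^{\ell}q+r_{\ell-2}$ (equivalently, to the parity of $\beta$) rather than to the displayed $n$.
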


\begin{proof}

By Lemma~\ref{FinLem2} there exists a non-terminating node $n=2^{\ell-1}q+r_{\ell-2}$ with $q\in\mathbb{Z}$ and $$\nu_{2}((an+k)^{2}-k^{2}+ac)\geq2(\ell-1).$$
Consider $n=2^{\ell}q+r_{\ell-2}$ with $q\in\mathbb{Z}$. By the same argument as in Lemma~\ref{FinLem2} and using the convention that $\Delta=m-8j$ where $j\in\mathbb{Z}$, we have $$(an+k)^{2}-k^{2}+ac=(2^{\ell}aq+2^{\ell-1}\beta)^{2}-2^{2(\ell-1)}(m-8j)=2^{2(\ell-1)}((2aq+\beta)^{2}+8j-m),$$ where $\beta\in\mathbb{Z}$. Recall that $a$ is odd. Then depending on whether $\beta$ is even or odd, simple calculations show the first two results.

In the case when $n=2^{\ell}q+2^{\ell-1}+r_{\ell-2}$ with $q\in\mathbb{Z}$ we have $$(an+k)^{2}-k^{2}+ac=2^{2(\ell-1)}((2aq+a+\beta)^{2}+8j-m),$$ where $\beta\in\mathbb{Z}$.
Then again depending on whether $\beta$ is odd or even, it is straightforward to show the last two results.
\end{proof}

\section{Structure of finite trees}\label{SecStrucTree}

The section describes the overall structure of finite trees, continuing the discussion of Case 3(c) of Theorem~\ref{MainThm1}, in which $a$ is odd, $b$ is even, $b^{2}-4ac=4^{\ell}\Delta$ where $\Delta\equiv\modd{m} {8}$, and $m\in\{2,3,5,6,7\}$. Throughout this section, we make use of several operators. The operators allow us to track changes from very easily described trees, which we call type $(\ell,1)$, to more complicated trees.

\begin{definition}[Translation operator,~\cite{Grafakos}]
For quadratics of the form $f(n)=an^{2}+bn+c$ we define $\tau^{s}(f)(n)=f(n-s)$ for $s\in\mathbb{R}$, namely $\tau^{s}(f)(n)=a(n-s)^{2}+b(n-s)+c=an^{2}+(b-2as)n+(c+as^{2}-bs)$.
\end{definition}
\begin{proposition}\label{FinPropStruc1}
Let the assumptions of Proposition~\ref{FinThm} hold for the function $f(n)=an^{2}+bn+c$ and suppose $s\in\mathbb{Z}$. 
Then we have the following relationship
\begin{equation*}
\nu_{2}(f(2^{i}q+r_{i-1}))
=\nu_{2}(\tau^{s}f(2^{i}q+(r_{i-1}+s) \bmod {2^{i}})).
\end{equation*}
That is the valuations $\nu_{2}(f(n))$ at the node of the form $n=2^{i}q+r_{i-1}$ are moved to the node of the form $n=\modd{2^{i}q+(r_{i-1}+s)} {2^{i}}$ under the operation $\tau^{s}$. \end{proposition}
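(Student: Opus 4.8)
The plan is to reduce the whole statement to the defining identity $\tau^{s}f(n)=f(n-s)$, which gives at once $\nu_{2}(\tau^{s}f(n))=\nu_{2}(f(n-s))$ since $\nu_{2}$ depends only on the integer value, and then to track how the residue class of $n$ modulo $2^{i}$ is altered by subtracting $s$. Before running the computation I would first record that translation leaves the discriminant unchanged: expanding $\tau^{s}f(n)=an^{2}+(b-2as)n+(c+as^{2}-bs)$, the quantity $(b-2as)^{2}-4a(c+as^{2}-bs)$ collapses to $b^{2}-4ac=4^{\ell}\Delta$. Since $a$ is odd and $b-2as$ is even, $\tau^{s}f$ again satisfies the hypotheses of Proposition~\ref{FinThm} with the same $\ell$ and the same $m$; hence its $2$-adic valuation tree is also finite with $\ell$ levels, and comparing the node of $f$ indexed by $r_{i-1}$ with a node of $\tau^{s}f$ at the same level is meaningful. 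I would emphasize that, unlike the earlier propositions, no induction is needed here: a single congruence manipulation settles all levels $i$ at once.

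The core computation is then direct. Writing $r'=(r_{i-1}+s)\bmod 2^{i}$, I evaluate the right-hand side using the definition of $\tau^{s}$:
$$\nu_{2}\bigl(\tau^{s}f(2^{i}q+r')\bigr)=\nu_{2}\bigl(f(2^{i}q+r'-s)\bigr).$$
Because $r'\equiv r_{i-1}+s \pmod{2^{i}}$, we have $r'-s\equiv r_{i-1}\pmod{2^{i}}$, so $r'-s=r_{i-1}-2^{i}t$ where $t=\lfloor (r_{i-1}+s)/2^{i}\rfloor$ is the carry produced by the reduction. Substituting $2^{i}q+r'-s=2^{i}(q-t)+r_{i-1}$ yields
$$\nu_{2}\bigl(\tau^{s}f(2^{i}q+r')\bigr)=\nu_{2}\bigl(f(2^{i}(q-t)+r_{i-1})\bigr),$$
which is exactly the valuation of $f$ along the node indexed by $r_{i-1}$. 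As $q$ ranges over the index set, $q-t$ ranges over the same class, so the two nodes carry identical valuation data; this is precisely the assertion that $\tau^{s}$ moves the node $r_{i-1}$ of $f$ to the node $(r_{i-1}+s)\bmod 2^{i}$ of $\tau^{s}f$.

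The one genuine subtlety — and the step I would state carefully — is the carry $t$ hidden inside the reduction $(r_{i-1}+s)\bmod 2^{i}$, which induces a shift $q\mapsto q-t$ of the branch variable. By Proposition~\ref{FinThm} the sequence $(\nu_{2}(f(n)))_{n\geq 0}$ has period $2^{\ell}$, so the valuations along the node $r_{i-1}$ form a periodic sequence in $q$ of period $2^{\ell-i}$, and shifting its phase by $t$ reproduces the same periodic pattern. Consequently the stated identity is to be read at the level of nodes (residue classes): $\tau^{s}$ carries the node $r_{i-1}$ of $f$ onto the node $(r_{i-1}+s)\bmod 2^{i}$ of $\tau^{s}f$ with the identical valuation profile, and in particular at a terminating node this profile is a single constant, so the equality then holds verbatim for every $q$. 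Everything else is the routine substitution above.
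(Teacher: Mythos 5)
Your proof is correct and follows essentially the same route as the paper, which simply observes that $\tau^{s}$ translates the periodic sequence $(\nu_{2}(f(n)))_{n\geq 0}$ by $s$ places so that nodes are relabeled by $r_{i-1}\mapsto (r_{i-1}+s)\bmod 2^{i}$. Your version is more explicit than the paper's two-line argument --- in particular your careful treatment of the carry $t$ and the resulting phase shift $q\mapsto q-t$, resolved by periodicity, makes precise exactly the point the paper glosses over.
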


\begin{proof}
Note that finite trees with $\ell$ levels correspond to periodic sequences
with a period equal to $2^{\ell}$. Since $\tau^s$ is a translation operator, every element in the sequence $(\nu_2(f(n)))_{n\geq0}$ is moved over $s$ spaces.
\end{proof}

\begin{definition}[$S$-operator]\label{defn-S-operator}
Let $a$ be a positive, odd integer. For quadratics of the form $f(n)=n^{2}+bn+ac$ we define $S^{a}(f)(n)=an^{2}+bn+c$. Likewise, for quadratics of the form $f(n)=an^{2}+bn+c$ define $S^{a^{-1}}(f)(n)=n^{2}+bn+ac$.
\end{definition}

In general, the $S$-operator need not output a quadratic function with an integer constant term. However, the present work only applies $S^a$ to functions whose output has integer coefficients.

\begin{definition}[Dilation operator,~\cite{Grafakos}]
For quadratics of the form $f(n)=an^{2}+bn+c$ we define $\delta^{s}(f)(n)=f(sn)$ for $s\in\mathbb{R}$, namely $\delta^{s}(f)(n)=a(sn)^{2}+b(sn)+c$.
\end{definition}

\begin{lemma}\label{FinLemStruc2}
Under the assumptions of Proposition~\ref{FinThm} the trees created by $f(n)=n^{2}+bn+ac$ and $S^{a}(f)(n)$ where $a\in\mathbb{Z}$ have the same number of levels. Similarly, the trees created by $g(n)=an^{2}+bn+c$ and $\tau^{s}(g)(n)$ where $s\in\mathbb{Z}$ have the same number of levels.
\end{lemma}

\begin{proof}
The assumptions of Proposition~\ref{FinThm} represent Cases 3(b) and 3(c) of Theorem~\ref{MainThm1}. Simple calculations show that the discriminants of $f(n)$ and $S^{a}(f)(n)$ are the same, and that the discriminants of $g(n)$ and $\tau^{s}(g)(n)$ are the same. The conclusions then follow directly from Proposition~\ref{FinThm}.
\end{proof}

\begin{proposition}\label{FinPropStruc2}
Let the assumptions of Proposition~\ref{FinThm} hold and suppose $f(n)=n^{2}+bn+ac$. Then we have the following relationship
\begin{equation*}
\nu_{2}(f(2^{i}q+r_{i-1}))=\nu_{2}(S^{a}(f(2^{i}q+a^{-1}\cdot r_{i-1}))).
\end{equation*}
That is the valuation $\nu_{2}(f(n))$ at the node the form $n=2^{i}q+r_{i-1}$ is moved to the node of the form of $n=2^{i}q+(a^{-1}\cdot \modd{r_{i-1})} {2^{i}}$ under the operation $S^{a}$. In this context $a^{-1}$ is the inverse of $\modd{a} {2^{i}}$.
\end{proposition}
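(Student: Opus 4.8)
The plan is to reduce the whole statement to one algebraic identity relating $f$ to $S^{a}(f)$ and then transport it across the residue classes that label the nodes. Write $g=S^{a}(f)$, so that $g(n)=an^{2}+bn+c$ by Definition~\ref{defn-S-operator}. The first step is to record the identity
\[
f(an)=(an)^{2}+b(an)+ac=a\bigl(an^{2}+bn+c\bigr)=a\cdot g(n),
\]
valid for every integer $n$. Applying Lemma~\ref{SuppLemm1} and using that $a$ is odd, so $\nu_{2}(a)=0$, this yields at once $\nu_{2}(f(an))=\nu_{2}(g(n))$ for all $n\in\mathbb{Z}$. This pointwise identity is the engine of the entire argument.

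Next I would convert this into the claimed statement about nodes. Since $a$ is odd it is a unit modulo $2^{i}$, so $a^{-1}\bmod 2^{i}$ exists and the map $n\mapsto an$ permutes the residue classes modulo $2^{i}$. In particular, if $n\equiv a^{-1}r_{i-1}\pmod{2^{i}}$ then $an\equiv r_{i-1}\pmod{2^{i}}$; that is, the node $n\equiv a^{-1}r_{i-1}\pmod{2^{i}}$ in the tree of $g$ is carried by $n\mapsto an$ onto the node $n\equiv r_{i-1}\pmod{2^{i}}$ in the tree of $f$. Combined with $\nu_{2}(g(n))=\nu_{2}(f(an))$, this matches the two nodes and gives the displayed equality. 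By Lemma~\ref{FinLemStruc2} the two trees have the same number of levels, and because the inverse of $a$ modulo $2^{i+1}$ reduces to the inverse modulo $2^{i}$, the correspondence is consistent as one descends the tree; hence it is a genuine bijection between the nodes of the two trees.

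The one point requiring care, which I expect to be the main obstacle, is the precise sense in which the valuations ``match at a node.'' Writing $n=2^{i}q+(a^{-1}r_{i-1}\bmod 2^{i})$ gives $an=2^{i}(aq)+a\cdot(a^{-1}r_{i-1}\bmod 2^{i})$, whose residue modulo $2^{i}$ is $r_{i-1}$ but whose quotient is not $q$, so the identity does not hold term-by-term with a common index. I would therefore phrase the conclusion at the level of the residue class: when the node $r_{i-1}$ is terminating with constant valuation $V$, the equality $\nu_{2}(g(n))=\nu_{2}(f(an))$ forces $\nu_{2}(g(n))=V$ for \emph{every} $n$ in the class $a^{-1}r_{i-1}\pmod{2^{i}}$, so the image node is terminating with the same value $V$; non-constancy is preserved in the same way, so non-terminating nodes map to non-terminating nodes. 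This reindexing of $q$ is resolved precisely because the tree records the common valuation of an entire residue class, which is exactly the data that the identity $f(an)=a\,g(n)$ transports.
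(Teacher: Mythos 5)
Your proof is correct and follows essentially the same route as the paper: the identity $f(an)=a\cdot S^{a}(f)(n)$ together with $\nu_{2}(a)=0$ is exactly the paper's observation that $\nu_{2}(S^{a}(f)(n))=\nu_{2}(\delta^{a}(f)(n))=\nu_{2}(f(an))$, followed by the same transport of residue classes under the unit $a$ modulo $2^{i}$. Your explicit discussion of why the mismatch in the quotient $q$ is harmless (a node records the common valuation of an entire residue class, which is the data the identity transports) addresses a point the paper's proof passes over silently.
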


\begin{proof}
Since $a$ is odd, note that
$$\nu_{2}(S^{a}(f)(n))=\nu_{2}((an^{2}+bn+c))=\nu_{2}((an)^{2}+b(an)+ac)=\nu_{2}(\delta^{a}(f)(n)),$$ 
where $\delta^{a}(f)(n)=f(an)$ is the dilation operator. Thus, the valuation of $f(n)$ for $n=2^{i}q+r_{i-1}$ is the same as the valuation of $n'=2^{i}(a^{-1}q)+a^{-1}\cdot r_{i-1}$ after the $S^a$-operator is applied.
\end{proof}

Suppose that $f(n)=an^{2}+bn+c$ creates a finite tree. We say that this tree is \textit{type $(\ell,1)$}, for $\ell\geq2$, if at every level the non-terminating node is of the form $n=2q$ or $n=2^{i}q+2^{i-2}+\cdots+2^{1}+2^{0}$ for $i<\ell$ and the tree has $\ell$ levels. We also say that a quadratic function is type $(\ell,1)$ if it creates an $(\ell,1)$ tree. That is, $f(n)$ creates a finite tree of the following form:

\begin{figure}[h!]
    \centering
\begin{equation*}
\begin{tikzpicture}[line cap=round,line join=round,>=triangle 45,x=1cm,y=1cm]
\draw [line width=0.8pt] (0,7)-- (-1,6);
\draw [line width=0.8pt] (0,7)-- (1,6);
\draw [fill=black] (0,7) circle (1pt);
\draw [fill=black] (-1,6) circle (1pt);
\draw [fill=black] (1,6) circle (1pt);
\draw [line width=0.8pt] (-1,6)-- (-2,5);
\draw [line width=0.8pt] (-1,6)-- (0,5);
\draw [fill=black] (0,5) circle (1pt);
\draw [fill=black] (-2,5) circle (1pt);
\draw [line width=0.8pt] (-2,5)-- (-3,4);
\draw [line width=0.8pt] (-2,5)-- (-1,4);
\draw [fill=black] (-3,4) circle (1pt);
\draw [fill=black] (-1,4) circle (1pt);
\draw [line width=0.8pt,dotted] (-3,4)-- (-5,2);
\draw [line width=0.8pt] (-5,2)-- (-6,1);
\draw [line width=0.8pt] (-5,2)-- (-4,1);
\draw [fill=black] (-5,2) circle (1pt);
\draw [fill=black] (-6,1) circle (1pt);
\draw [fill=black] (-4,1) circle (1pt);
\end{tikzpicture}
\end{equation*}
    \caption{The form of trees of type $(\ell,1)$.}
    \label{fig:ell_1_tree}
\end{figure}
\vspace{0.2in}

 Here, we suppose that $\ell\geq2$ because $\ell=1$ creates a tree with one level, see Lemma~\ref{FinLem1}, and the directional behavior we seek to classify is not defined. The conditions $4a^2-4ac=4^{\ell}\Delta$ for $\ell\in\mathbb{Z}^+$ as large as possible, $\Delta\equiv\modd{m} {8}$, and $m\in\{2,3,5,6,7\}$ imply $c$ must be odd.

\begin{proposition}\label{FinPropStruc3}
Under the assumptions of Proposition~\ref{FinThm}, if $c$ is odd and $\ell\geq2$ is an integer, then a quadratic of the form $f(n)=an^{2}+2an+c$ creates a tree that is of type $(\ell,1)$. Furthermore, we have
\begin{displaymath}
\nu_{2}(f(n))=\begin{cases}
0,&\ \text{if}\ n\equiv\modd{0} {2};\\
2(i-1),&\ \text{if}\ n\equiv\modd{\sum_{k=0}^{i-2}2^{k}} {2^{i}}\ \text{with}\ 2\leq i<\ell;\\
2(\ell-1),&\ \text{if}\ n\equiv\modd{\sum_{k=0}^{\ell-2}2^{k}} {2^{\ell}}\ \text{and}\ m=6,2;\\
2\ell-1,&\ \text{if}\ n\equiv\modd{\sum_{k=0}^{\ell-2}2^{k}} {2^{\ell}}\ \text{and}\ m=7,3;\\
2\ell,&\ \text{if}\ n\equiv\modd{\sum_{k=0}^{\ell-2}2^{k}} {2^{\ell}}\ \text{and}\ m=5;\\
2\ell-1,&\ \text{if}\ n\equiv\modd{\sum_{k=0}^{\ell-1}2^{k}} {2^{\ell}}\ \text{and}\ m=6,2;\\
2(\ell-1),&\ \text{if}\ n\equiv\modd{\sum_{k=0}^{\ell-1}2^{k}} {2^{\ell}}\ \text{and}\ m=7,5,3.
\end{cases}
\end{displaymath}
\end{proposition}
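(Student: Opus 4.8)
The plan is to specialize the general finite-tree analysis of Lemmas~\ref{FinLem2} and~\ref{FinLem3} to the function $f(n)=an^{2}+2an+c$, where the choice $b=2a$ collapses all of the data into the single clean branch that characterizes type $(\ell,1)$. First I would record the specialization: since $b=2a$ we have $k=a$ in the completing-the-square identity $\nu_{2}(f(n))=\nu_{2}((an+k)^{2}-k^{2}+ac)$, and the discriminant condition $b^{2}-4ac=4a(a-c)=4^{\ell}\Delta$ gives $k^{2}-ac=a^{2}-ac=4^{\ell-1}\Delta$. Hence $\nu_{2}(f(n))=\nu_{2}\big(a^{2}(n+1)^{2}-4^{\ell-1}\Delta\big)$, and by Proposition~\ref{FinThm} the tree is already known to be finite with exactly $\ell$ levels; what remains is to locate the non-terminating branch and to evaluate the terminating nodes.

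Next I would pin down the non-terminating node at each level $i$ with $1\le i<\ell$. By the characterization in the proof of Lemma~\ref{FinLem2}, a level-$i$ node $n\equiv r_{i-1}\pmod{2^{i}}$ is non-terminating exactly when $a\,r_{i-1}+k\equiv 0\pmod{2^{i}}$; substituting $k=a$ and using that $a$ is invertible modulo $2^{i}$ forces $r_{i-1}\equiv -1\equiv 2^{i}-1=\sum_{h=0}^{i-1}2^{h}\pmod{2^{i}}$. Because $2^{i}-1\equiv 2^{i-1}-1\pmod{2^{i-1}}$, these residues nest, so the non-terminating nodes form a single descending branch; this is the type $(\ell,1)$ structure of Figure~\ref{fig:ell_1_tree}. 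Its terminating sibling at level $i$ is then $n\equiv 2^{i-1}-1=\sum_{h=0}^{i-2}2^{h}\pmod{2^{i}}$, and Lemma~\ref{FinLem2} assigns it the valuation $2(i-1)$; at level $1$ the terminating node is the even class $n\equiv 0\pmod 2$ with valuation $0$. This already delivers the first two lines of the claimed formula.

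For the final level I would treat the two children $n\equiv 2^{\ell-1}-1$ and $n\equiv 2^{\ell}-1\pmod{2^{\ell}}$ of the level-$(\ell-1)$ non-terminating node. Writing $w=n+1$, the dichotomy in Lemma~\ref{FinLem3} between $an+k\equiv 0$ and $an+k\not\equiv 0\pmod{2^{\ell}}$ becomes, since $an+k=aw$ with $a$ odd, the dichotomy $w\equiv 0\pmod{2^{\ell}}$ (the class $n\equiv 2^{\ell}-1$) versus $w\not\equiv 0\pmod{2^{\ell}}$ (the class $n\equiv 2^{\ell-1}-1$). I would then evaluate $\nu_{2}\big(a^{2}w^{2}-4^{\ell-1}\Delta\big)$ directly: extracting $2^{2(\ell-1)}$ reduces it to $\nu_{2}\big(a^{2}(w/2^{\ell-1})^{2}-\Delta\big)$ for the class $n\equiv 2^{\ell-1}-1$, where an odd square is $\equiv 1\pmod 8$ so the answer is governed by $1-m\pmod 8$, and to $\nu_{2}\big(4a^{2}(w/2^{\ell})^{2}-\Delta\big)$ for the class $n\equiv 2^{\ell}-1$, where the leading term is divisible by $4$ so the answer is governed by $\nu_{2}(\Delta)$, i.e.\ by whether $m$ is odd or $m\in\{2,6\}$. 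The resulting split $m\in\{2,6\}$, $m\in\{3,7\}$, $m=5$ yields the valuations $2(\ell-1)$, $2\ell-1$, $2\ell$ exactly as in the remaining lines of the statement.

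The conceptual heart of the argument is the second step: recognizing that $k=a$ forces every non-terminating residue to be the all-ones pattern $2^{i}-1$, which is what makes the tree type $(\ell,1)$ and makes all the residues in the statement explicit. The only genuinely delicate bookkeeping is the final level, where the three residue classes of $m$ and the two children must be matched to $2(\ell-1)$, $2\ell-1$, and $2\ell$ without sign errors; reducing $a^{2}(n+1)^{2}-4^{\ell-1}\Delta$ modulo $8$ after extracting $2^{2(\ell-1)}$ is the safest route and avoids any labeling ambiguity between the two ``Suppose'' cases of Lemma~\ref{FinLem3}.
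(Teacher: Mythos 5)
Your proposal is correct and follows essentially the same route as the paper: both complete the square to reduce to $\nu_{2}\bigl(a^{2}(n+1)^{2}-4^{\ell-1}\Delta\bigr)$, identify the all-ones residues $2^{i}-1$ as the single non-terminating branch (you via the congruence $ar_{i-1}+k\equiv 0 \pmod{2^{i}}$ with $k=a$, the paper by direct substitution), and finish with the same mod-$8$ case analysis of $a^{2}(n+1)^{2}-4^{\ell-1}(m-8j)$ at the terminal level. No gaps; the differences are only in presentation.
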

\begin{proof}
In light of Lemma~\ref{FinLem2}, we know that if a node is non-terminating, then it produces two nodes that either both terminate (i.e., these nodes are at the $\ell^{th}$ level) or one node is non-terminating and the other is terminating. So in order to show that the tree is of type $(\ell,1)$, we only need to confirm that nodes corresponding to $n=2^{i}q+2^{i-1}+\cdots+2^{1}+2^{0}$, where $1\leq i\leq\ell$, are always non-terminating. Since $a$ is odd, completing the square and using the convention that $4a^{2}-4ac=4^{\ell}\Delta$ where $\Delta=m-8j$ where $j\in\mathbb{Z}$ gives
\begin{align*}
\nu_{2}(f(n))&=\nu_{2}(an^{2}+2an+c)=\nu_{2}(a(n+1)^{2}-a+c)\\
&=\nu_{2}(a^{2}(n+1)^{2}-a^{2}+ac)=\nu_{2}(a^{2}(n+1)^{2}-4^{\ell-1}(m-8j))\\
&=\nu_{2}(a^{2}(2^{i}q+2^{i-1}+2^{i-2}+\cdots+2+1+1)^{2}-4^{\ell-1}(m-8j))\\
&=\nu_{2}(a^{2}(2^{i}q+2^{i})^{2}-4^{\ell-1}(m-8j))\\
&=\nu_{2}(a^{2}4^{i}(q+1)^{2}-4^{\ell-1}(m-8j)).
\end{align*}
If $q$ is odd, then $n=2^{i}q+2^{i-1}+\cdots+2^{1}+2^{0}$ is the non-terminating node, provided $i<\ell$, and produces two nodes one of which does not terminate. If $i=\ell$, then both nodes terminate by Proposition~\ref{FinThm}.

The nodes that terminate are of the form $n=2^{i}q+2^{i-2}+\cdots+2^{1}+2^{0}$ when $1\leq i<\ell$. The case when $n=2q$ is handled by the proof of Lemma~\ref{FinLemStruc2}. For the case $1<i<\ell$, by the same calculation as above we have $$\nu_{2}(f(n))=\nu_{2}(a^{2}2^{2(i-1)}(2q+1)^{2}-4^{\ell-1}(m-8j))$$ Since $2q+1$ is odd and $i<\ell$ the valuation must be $2(i-1)$.

In the case when $n=2^{\ell}q+2^{\ell-2}+\cdots+2^{1}+2^{0}$ we have $$\nu_{2}(f(n))=\nu_{2}(a^{2}2^{2(\ell-1)}(2q+1)^{2}-4^{\ell-1}(m-8j))$$ Thus the valuation must be $2(\ell-1)$ if $m=6,2$, or $2\ell-1$ if $m=7,3$ or $2\ell$ if $m=5$.

Finally if $n=2^{\ell}q+2^{\ell-2}+\cdots+2^{1}+2^{0}$ we have $\nu_{2}(f(n))=\nu_{2}(a^{2}2^{2\ell}(q+1)^{2}-4^{\ell-1}(m-8j))$. Thus the valuation must be $2(\ell-1)$ if $m=7,5,3$ or $2\ell-1$ if $m=6,2$.
\end{proof}

If the function $f(n)=an^{2}+bn+c$ meets the assumptions of Proposition~\ref{FinThm} (Case 3(c) of Theorem~\ref{MainThm1}) note if we define the function $g(n)=n^{2}+2n-\left(1-\frac{b}{2}\right)^{2}+2\left(1-\frac{b}{2}\right)+ac$, then 
it follows that $S^{a}(\tau^{1-\frac{b}{2}}(g))(n)=f(n)$. Therefore, by Propositions~\ref{FinPropStruc1},~\ref{FinPropStruc2}, and~\ref{FinPropStruc3} we immediately have the following corollary.

\begin{corollary}\label{FinCor}
If $f(n)=an^{2}+bn+c$ meets the assumptions of Proposition~\ref{FinThm} (Case 3(c)) with $\ell\geq 2$, then 
\begin{displaymath}
\nu_{2}(f(n))=\begin{cases}
0,&\ \text{if}\ n\equiv\modd{ a^{-1}\left(1-\frac{b}{2}\right)} {2};\\
2(i-1),&\ \text{if}\ n\equiv\modd{ a^{-1}\left(2^{i-1}-\frac{b}{2}\right)} {2^{i}}\ \text{with}\ 2\leq i<\ell;\\
2(\ell-1),&\ \text{if}\ n\equiv\modd{ a^{-1}\left(2^{\ell-1}-\frac{b}{2}\right)} {2^{\ell}} \text{and}\ m=6,2;\\
2\ell-1,&\ \text{if}\ n\equiv\modd{ a^{-1}\left(2^{\ell-1}-\frac{b}{2}\right)} {2^{\ell}}\ \text{and}\ m=7,3;\\
2\ell,&\ \text{if}\ n\equiv\modd{ a^{-1}\left(2^{\ell-1}-\frac{b}{2}\right)} {2^{\ell}}\ \text{and}\ m=5;\\
2\ell-1,&\ \text{if}\ n\equiv\modd{ a^{-1}\left(2^{\ell}-\frac{b}{2}\right)} {2^{\ell}}\ \text{and}\ m=6,2;\\
2(\ell-1),&\ \text{if}\ n\equiv\modd{ a^{-1}\left(2^{\ell}-\frac{b}{2}\right)} {2^{\ell}}\ \text{and}\ m=7,5,3;\\
\end{cases}
\end{displaymath}
where $a^{-1}$ is the inverse of $\modd{a} {2^{\ell}}$.
\end{corollary}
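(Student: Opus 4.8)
The plan is to deduce Corollary~\ref{FinCor} directly from Proposition~\ref{FinPropStruc3} by tracking how the node residues are relabeled under the two operators $\tau^{s}$ and $S^{a}$, using the factorization $f=S^{a}(\tau^{1-b/2}(g))$ recorded just above the statement. Because these operators do not alter the valuation attached to a node but only the residue class that carries it, the valuations listed in Proposition~\ref{FinPropStruc3} transfer verbatim to $f$; only the arithmetic describing the residues must be recomputed.

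First I would confirm that the auxiliary function $g(n)=n^{2}+2n-(1-\tfrac{b}{2})^{2}+2(1-\tfrac{b}{2})+ac$ satisfies the hypotheses of Proposition~\ref{FinPropStruc3} with leading coefficient $1$. Its leading coefficient is $1$ and its linear coefficient is $2$, so it has the canonical shape $n^{2}+2n+c'$. By Lemma~\ref{FinLemStruc2} the discriminant is invariant under $\tau^{s}$ and $S^{a}$, so $g$, $\tau^{s}g$, and $f$ share the discriminant $b^{2}-4ac=4^{\ell}\Delta$; computing the discriminant of $g$ directly gives $4(1-c')$, whence $1-c'=4^{\ell-1}\Delta$, which is even for $\ell\geq2$, forcing $c'$ to be odd. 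Thus Proposition~\ref{FinPropStruc3} applies and supplies the valuation of $g$ at each node $n\equiv 2^{i-1}-1 \pmod{2^{i}}$ (and at the two level-$\ell$ residues $2^{\ell-1}-1$ and $2^{\ell}-1 \pmod{2^{\ell}}$), after rewriting $\sum_{k=0}^{i-2}2^{k}=2^{i-1}-1$.

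Next I would compose the residue relabelings. Writing $s=1-\tfrac{b}{2}\in\mathbb{Z}$ and $h=\tau^{s}g=n^{2}+bn+ac$, Proposition~\ref{FinPropStruc1} sends a node of residue $r$ for $g$ to the node of residue $r+s \pmod{2^{i}}$ for $h$, and Proposition~\ref{FinPropStruc2} then sends a node of residue $\rho$ for the monic $h$ to the node of residue $a^{-1}\rho \pmod{2^{i}}$ for $f=S^{a}h$. Chaining these, the residue $r$ for $g$ becomes $a^{-1}(r+s)$ for $f$ with the attached valuation unchanged. Substituting the four residue values from Proposition~\ref{FinPropStruc3}, namely $r=0$, $r=2^{i-1}-1$, $r=2^{\ell-1}-1$, and $r=2^{\ell}-1$, yields exactly $a^{-1}(1-\tfrac{b}{2})$, $a^{-1}(2^{i-1}-\tfrac{b}{2})$, $a^{-1}(2^{\ell-1}-\tfrac{b}{2})$, and $a^{-1}(2^{\ell}-\tfrac{b}{2})$, matching the seven cases of the corollary; the dependence on $m\in\{2,3,5,6,7\}$ is inherited verbatim from Proposition~\ref{FinPropStruc3}.

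The main obstacle is purely bookkeeping: keeping the order of composition straight (apply $\tau^{s}$ first, then $S^{a}$) and being careful that each map $a^{-1}(\cdot)$ is taken modulo the correct power $2^{i}$ at level $i$ while the statement records the final residues modulo $2^{\ell}$. One should check that $a^{-1}$ is well defined and consistent across levels; since $a$ is odd it is invertible modulo every $2^{i}$, and a residue listed modulo $2^{\ell}$ reduces correctly modulo each smaller power, so no genuine analytic difficulty remains once the operator identities of Propositions~\ref{FinPropStruc1} and~\ref{FinPropStruc2} are in hand.
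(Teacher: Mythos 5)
Your proposal is correct and follows essentially the same route as the paper: both use the factorization $f=S^{a}(\tau^{1-b/2}(g))$, identify $g$ as type $(\ell,1)$ via Proposition~\ref{FinPropStruc3}, and push the residues $r=\sum_{k=0}^{i-2}2^{k}=2^{i-1}-1$ through Propositions~\ref{FinPropStruc1} and~\ref{FinPropStruc2} to land on $a^{-1}\bigl(2^{i-1}-\tfrac{b}{2}\bigr)$. Your write-up is in fact slightly more careful than the paper's, since you explicitly verify that the constant term of $g$ is odd and that $a^{-1}$ is consistent across the moduli $2^{i}$.
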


\begin{proof}
Simply note that $g$ is type $(\ell,1)$ and recall the ways in which the operators affect the function $g$. Each terminating node, under the operators, moves from $n=2^{i}q+r_{i-2}$ to $n=2^{i}q+\modd{a^{-1}\left(r_{i-2}+1-\frac{b}{2}\right)} {2^{i}}$. In the case of type $(\ell,1)$ we have $r_{i-2}=\sum_{k=0}^{i-2}2^{k}$. Thus $r_{i-2}+1=2^{i-1}$ in each case.
\end{proof}

\section{Acknowledgments}
The authors would like to thank Dr.\ Victor Moll for suggesting this topic. We would also like to thank following institutions for providing support to collaborate: ICERM, AIM, Kentucky Wesleyan College, Ursinus College, and Stephen F. Austin State University. We are grateful to our other colleagues for their ongoing support: Dr.\ Maila Brucal-Hallare, Dr.\ Jean-Claude Pedjeu, and Dr.\ Bianca Thompson. And, finally, we are very grateful to the reviewer who provided many very helpful and insightful comments.

\section*{Appendix: figures illustrating trees and tables of values for $2$-adic valuation sequences of some quadratic functions}
In the following tree representations, a closed circle indicates a terminating node and an open circle indicates a non-terminating node.
\begin{figure}[H]
    \centering
    \includegraphics[scale=0.4]{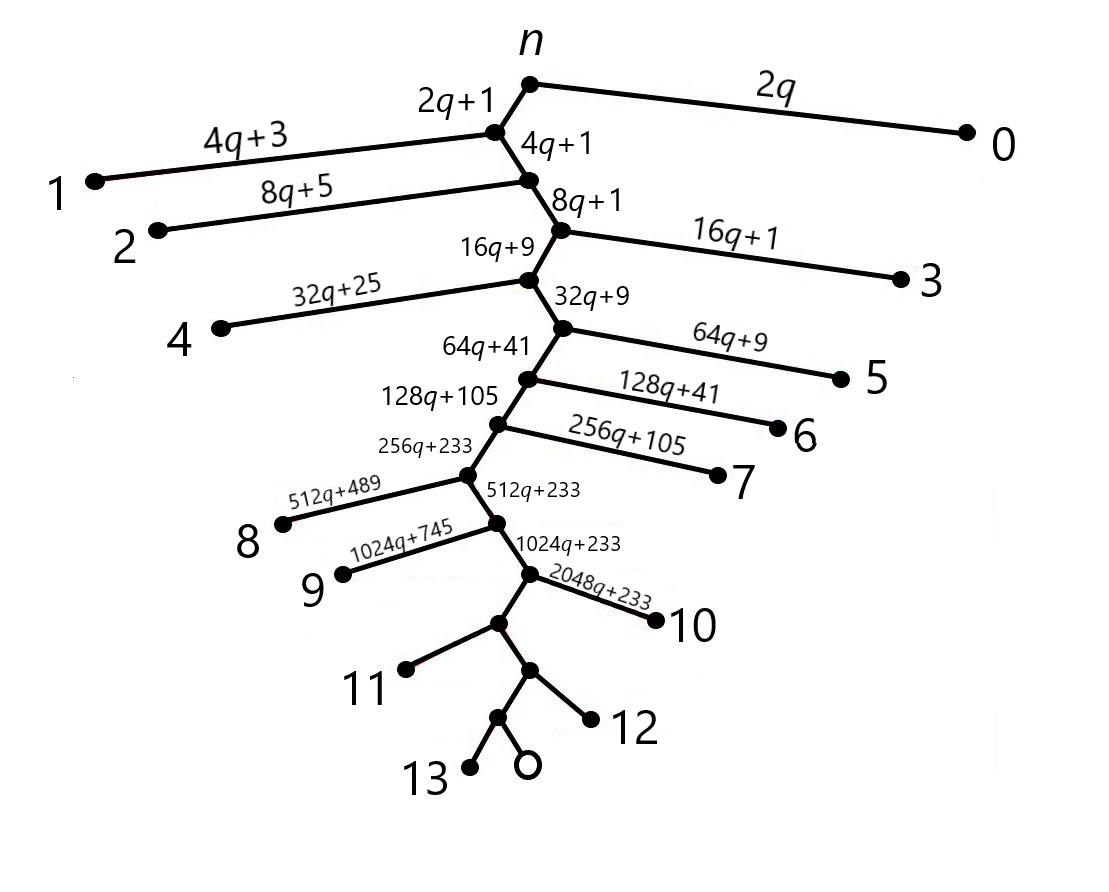}
    \vspace{0.2in}
    \small
    \begin{tabular}{c|cccc ccccc ccc}
$n$&0&1&2&3&4&5&6&7&8&9&10&11\\ \hline
$f_1(n)$&$-25$&$-8$&17&50&91&140&197&262&335&416&505&602\\
$\nu_{2}(f_12(n))$&0&3&0&1&0&2&0&1&0&5&0&1
\end{tabular}    
\vspace{0.2in}
\begin{tabular}{c|cccc ccccc cc}
    $n$&12&13&14&15&16&17&18&19&20&21\\ \hline
    $f_1(n)$&707&820&941&1070&1207&1352&1505&1666&1835&2012\\
    $\nu_{2}(f_1(n))$&0&2&0&1&0&3&0&1&0&2
    \end{tabular}
    \vspace{0.2in}
    \caption{The 2-adic valuation tree for $f_1(n)=4n^{2}+13n-25$. Theorem~\ref{MainThm1} predicts that $(\nu_2(f_1(n)))_{n\geq 0}$ is an unbounded sequence, as it satisfies Case 2.}
    \label{fig:Ex8}
\end{figure}

\begin{figure}[H]
    \centering
    \includegraphics[scale=0.4]{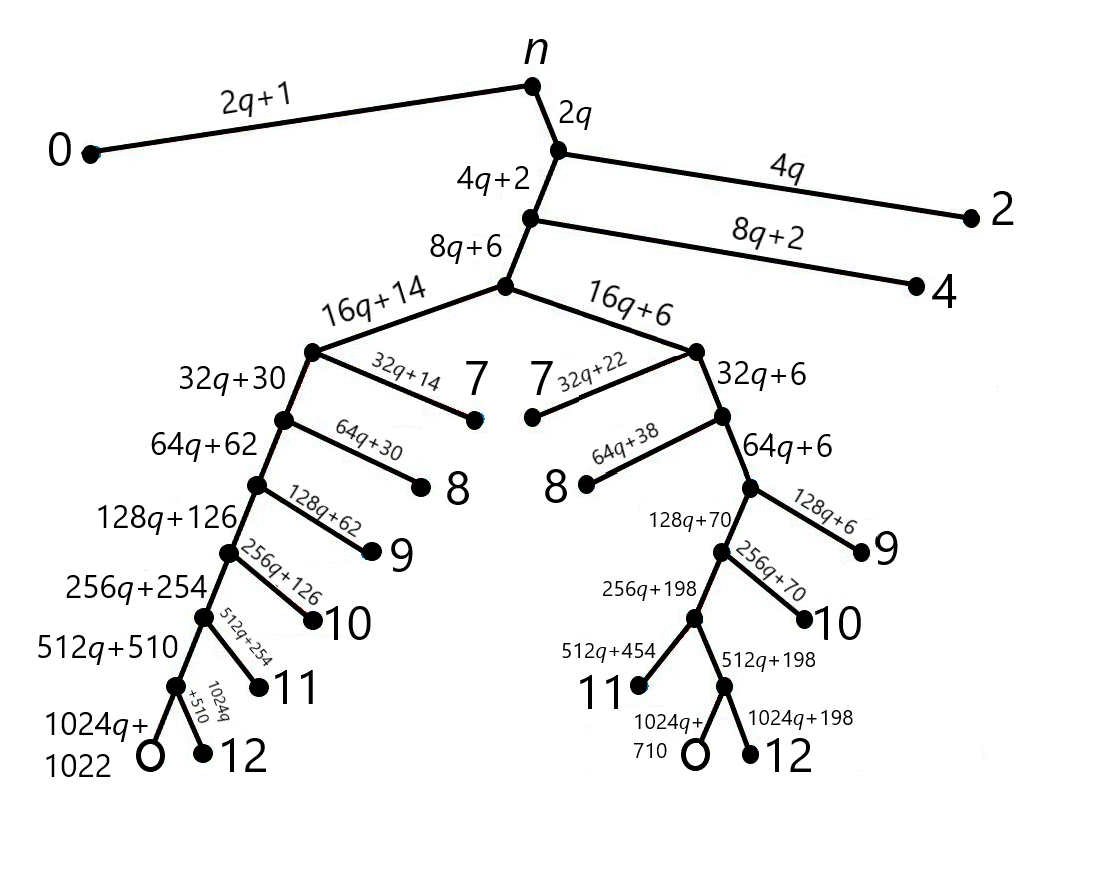}
    \vspace{0.2in}
    \small
    \begin{tabular}{c|cccc ccccc ccc}
$n$&0&1&2&3&4&5&6&7&8&9&10\\ \hline
$f_2(n)$&$-28$&$-3$&48&125&228&357&512&693&900&1133&1392\\
$\nu_{2}(f_2(n))$&2&0&4&0&2&0&9&0&2&0&4&\\
\end{tabular}
\vspace{0.2in}
\begin{tabular}{c|cccc ccccc ccc}
    $n$&11&12&13&14&15&16&17&18&19\\ \hline
    $f_2(n)$&1677&1988&2325&2688&3077&3492&3933&4400&4893\\
    $\nu_{2}(f_2(n))$&0&2&0&7&0&2&0&4&0\\
    \end{tabular}
    \vspace{0.2in}
    \caption{The 2-adic valuation tree and data for $f_2(n)=13n^{2}+12n-28$. Notice that Theorem~\ref{MainThm1} predicts that $(\nu_2(f_2(n)))_{n\geq 0}$ is an unbounded sequence, as it satisfies Case 3(a) since $12^{2}-4\cdot13(-28)=4^{3}(1-8(-3))$.}
    \label{fig:Ex7}
\end{figure}

\begin{figure}[H]
    \centering
    \includegraphics[scale=0.4]{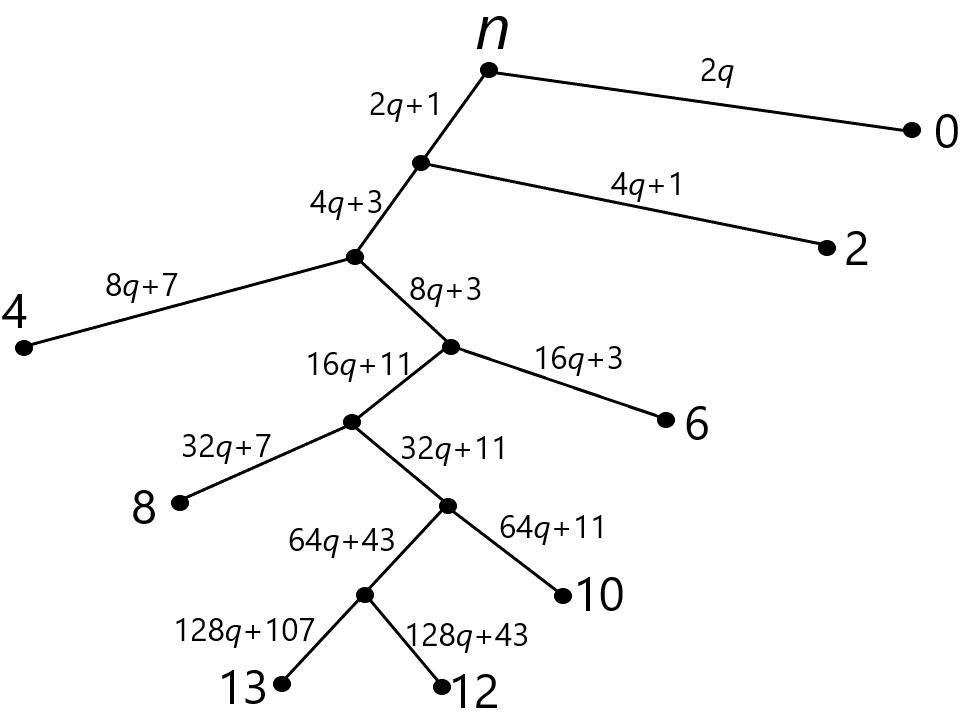}
    \vspace{0.2in}
    \small
\begin{tabular}{c|cccc ccccc}
$n$&0&1&2&3&4&5&6&7\\\hline
$f_3(n)$&25559&26716&27903&29120&30367&31644&32951&34288\\
$\nu_{2}(f_3(n))$&0&2&0&6&0&2&0&4
\end{tabular}    
\vspace{0.2in}
\begin{tabular}{c|cccc ccccc}
    $n$&8&9&10&11&12&13&14&15\\\hline
    $f_3(n)$&35655&37052&38479&39936&41423&42940&44487&46064\\
    $\nu_{2}(f_3(n))$&0&2&0&10&0&2&0&4
    \end{tabular}
    \vspace{0.2in}
    \caption{The 2-adic valuation tree and data for $f_3(n)=15n^2+1142n+25559$. Notice that Theorem~\ref{MainThm1} predicts that $(\nu_{2}(f_3(n))_{n\geq0}$ is a bounded sequence, as it satisfies Case 3(c) since $1142^2-4\cdot 15 \cdot 25559=4^7(2-8\cdot 2)$.}
    \label{fig:Ex5}
\end{figure}

\begin{figure}[H]
    \centering
    \includegraphics[scale=0.4]{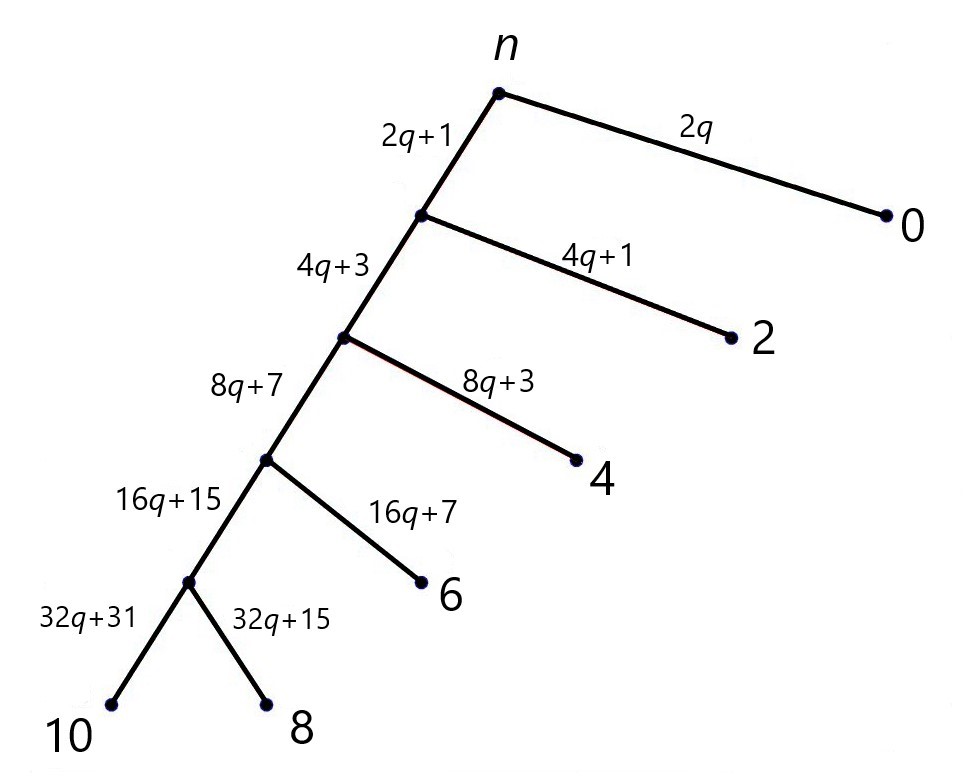}
    \vspace{0.2in}
    \small
    \begin{tabular}{c|cccc ccccc cc}
$n$&0&1&2&3&4&5&6&7&8&9\\\hline
$f_4(n)$&1125&1236&1357&1488&1629&1780&1941&2112&2293&2484\\
$\nu_{2}(f_4(n))$&0&2&0&4&0&2&0&6&0&2
\end{tabular}
\vspace{0.2in}
\begin{tabular}{c|cccc ccccc cc}
    $n$&10&11&12&13&14&15&16&17&18&19\\\hline
    $f_4(n)$&2685&2896&3117&3348&3589&3840&4101&4372&4653&4944\\
    $\nu_{2}(f_4(n))$&0&4&0&2&0&8&0&2&0&4
    \end{tabular}
    \vspace{0.2in}
    \caption{The 2-adic valuation tree and data for $f_4(n)=5n^{2}+106n+1125$.  Notice that Theorem~\ref{MainThm1} predicts that $(\nu_{2}(f_4(n))_{n\geq0}$ is a bounded sequence, as it satisfies Case 3(c) since $106^2-4\cdot 5 \cdot 1125=4^5(5-8\cdot 2)$.}
    \label{fig:Ex6}
\end{figure}


\begin{thebibliography}{9}

\bibitem{AlloucheShallit}
J.-P. Allouche and J. Shallit, \textit{Automatic Sequences, Theory,
Applications, Generalizations,} Cambridge University Press, 
2003.

\bibitem{Bell}
J. P. Bell, $p$-adic valuations and $k$-regular sequences,
\textit{Discrete Math.} \textbf{307} (2007), 3070--3075.

\bibitem{Byrnes}
A. N. Byrnes, J. Fink, G. Lavigne, I. Nogues,
S. Rajasekaran, A. Yuan, L. Almodovar, X. Guan,
A. Kesarwani, L. A. Medina, E. Rowland, and V. H. Moll,
A closed-form solution might be given by a tree. 
Valuations of quadratic polynomials,
\textit{Sci. Ser. A Math. Sci.} \textbf{29} (2019), 11--28.

\bibitem{Grafakos}
L. Grafakos,
\textit{Classical Fourier Analysis}, Second Edition,
Springer, 2008.

\bibitem{Gouvea}
F. Gouv\^{e}a,
\textit{p-Adic Numbers: An Introduction},
Springer-Verlag, 1997.

\bibitem{Medina}
L. A. Medina, V. H. Moll, and E. Rowland,
Periodicity in the $p$-adic valuation of a polynomial,
\textit{J. Number Theory} \textbf{180} (2017), 139--153.

\bibitem{Robert}
A. M. Robert, \textit{A Course in p-adic Analysis}, Springer, 2000.
\end{thebibliography}
\end{document}